\newcommand{\MRhref}[1]{\href{https://mathscinet.ams.org/mathscinet-getitem?mr=#1}{MR#1}}
\newtheorem{thm}{Theorem}
\newtheorem{lem}{Lemma}
\newtheorem{definition}{Definition}
\newtheorem{result}{Result}
\newtheorem{cor}{Corollary}
\newtheorem{pro}{Proposition}
\newcommand{\mP}{\mathbb{P}}
\newcommand{\blind}{0}
\begin{document}

\def\spacingset#1{\renewcommand{\baselinestretch}%
{#1}\small\normalsize} \spacingset{1}

%%%%%%%%%%%%%%%%%%%%%%%%%%%%%%%%%%%%%%%%%%%%%%%%%%%%%%%%%%%%%%%%%%%%%%%%%%%%%%

\if0\blind
{
  \title{\bf Extreme Score Distributions in Countable-Outcome Round-Robin Tournaments of Equally Strong Players\\
}

  \author{
  Yaakov Malinovsky
    \thanks{email: yaakovm@umbc.edu}
   \\
    Department of Mathematics and Statistics\\ University of Maryland, Baltimore County, Baltimore, MD 21250, USA\\
}
  \maketitle
} \fi

\if1\blind
{
  \bigskip
  \bigskip
  \bigskip
  \begin{center}
    {\LARGE\bf Title}
\end{center}
  \medskip
} \fi
\begin{abstract}
We consider a general class of round-robin tournament models of equally strong players. In these models, each of the $n$ players competes against every other player exactly once. For each match between two players, the outcome is a value from a countable subset of the unit interval, and the scores of the two players in a match sum to one.
The final score of each player is defined as the sum of the scores obtained in matches against all other players. We study the distribution of extreme scores, including the maximum, second maximum, and lower-order extremes. Since the exact distribution is computationally intractable even for small values of $n$, we derive asymptotic results as the number of players $n$ tends to infinity, including limiting distributions, and rates of convergence.

\end{abstract}

\noindent%
{\it Keywords: Complete graph, negative association, order statistics, Poisson approximation, total variation distance}

\noindent%
{\it MSC2020: 62G32; 05C20; 60C05}
%\hfill {\tiny technometrics tex template (do not remove)}

\spacingset{1.42} % DON'T change the spacing!
\section{Introduction and Background}
Round-robin tournaments constitute a classical and extensively studied model in combinatorics, probability, and statistics. In such tournaments, each of the $n$ players competes against each of the others $n-1$ players, and the outcome of each match contributes to the players’ final scores. The structure and distribution of these scores have been studied from combinatorial, probabilistic, and statistical perspectives; however, the behavior of extreme values has received comparatively little attention.

In an 1895 paper, Edmund Landau proposed and investigated an alternative scoring system for chess tournaments, differing from the classical scheme, still in use today, in which the winner of a game receives $1$ point, the loser $0$ points, and, in the case of a draw, each player receives $\tfrac{1}{2}$ point \citep{L1895,L1914}.
More recently, \cite{SZ2022} observed that Landau’s idea of determining scores through relative comparisons plays a central role in the original 1998 \textit{Google} search algorithm, PageRank.

Another eminent mathematician, Ernst Zermelo, referred to Landau’s 1914 work \citep{L1914} and proposed a probabilistic model in which the probability that a player wins a game is given by the ratio of that player’s strength to the sum of the strengths of the two competing players. He estimated the model parameters using the method of maximum likelihood and applied this approach to the analysis of the 1924 New York chess tournament \citep{Z1929}. The Zermelo model was later rediscovered by \cite{BT1952} and by \cite{F1957}; see also the detailed historical discussion in \cite{DE2001}, as well as the probabilistic aspects of rating models discussed in \cite{A2017}.

Formally, in a round-robin tournament, each of the $n$ players competes against each of the other $n-1$ players exactly once. When player $i$ competes against player $j$, the reward obtained by player $i$ is a random variable $X_{ij}$. We define
\begin{equation*}
%\label{eq:score}
s_i(n)=\sum_{\substack{j=1 \\ j\neq i}}^{n} X_{ij}
\end{equation*}
to be the score of player $i$, $1 \le i \le n$, after playing all other $n-1$ players.

Throughout this paper, we distinguish between models that allow different sets of possible values for the random variables $X_{ij}$. Specifically, we consider the following cases:
\begin{enumerate}
\item
$D_k = \left\{0,\frac{1}{k}, \frac{2}{k},\ldots,1\right\}$,

%\item
%$D_{k_{ij}} = \{0,1,\ldots,k_{ij}\}$, where the set of possible values may depend on the pair $(i,j)$,

\item
$D \subset [0,1]$ is a countable set.
\end{enumerate}

In the classical round-robin tournament, $X_{ij}+X_{ji}=1$ and $X_{ij}\in D_{1}$, whereas in the chess round-robin tournament discussed above, $X_{ij}+X_{ji}=1$ and $X_{ij}\in D_{2}$. We assume that the $\binom{n}{2}$ random vectors
$
\{(X_{ij},X_{ji}) : 1 \le i < j \le n\}
$
are independent.

We refer to the vector
$$
\bigl(s_1(n), s_2(n), \ldots, s_n(n)\bigr)
$$
as the \emph{score sequence} of the tournament. The corresponding order statistics are denoted by
$$
s_{(1)}(n)\leq s_{(2)}(n)\leq \cdots \leq s_{(n)}(n).
$$

We also denote by
$$
\bigl(s_1^{\star}(n), s_2^{\star}(n), \ldots, s_n^{\star}(n)\bigr)
$$
the normalized score sequence of the tournament, where, for a given model, each score is centered by subtracting its expectation and scaled by dividing by its standard deviation. The corresponding order statistics are denoted by
$$
s_{(1)}^{\star}(n)\leq s_{(2)}^{\star}(n)\leq \cdots \leq s_{(n)}^{\star}(n).
$$

The exact distribution of score sequences in the classical round-robin tournament for $n\le 9$, and in the chess round-robin model for $n\le 6$ under the assumption of equally strong players, was determined by the prominent mathematician Percy A.~MacMahon \citep{M1923a,M1923b}. However, even for small values of $n$, determining the exact distribution is in general computationally infeasible due to the astronomical size of the sample space, which, for example, in the classical round-robin tournament equals $2^{\binom{n}{2}}$.

An excellent introduction to the combinatorial and probabilistic aspects of classical round-robin tournaments, together with references to earlier work, is provided in \cite{Moon1968}; see also the corrected 2013 version available via \href{https://www.gutenberg.org/ebooks/42833}{Project Gutenberg}.

Round-robin tournaments may be viewed as collections of paired comparisons among players. The combinatorial, probabilistic, and statistical aspects of paired-comparison models, often referred to as the method of paired comparisons, a term coined by Thurstone \citep{T1927}; see also \citep{DE2001}, have been extensively studied in the literature. Classical and modern contributions include, among many others, \citep{KBS1940, K1955, B1956, D1959, H1963, D1988, J1988, CS1998, ABKKW2006, ACKPR2017, CDL2017, BF2018, Csato2021, R2021}. Closely related problems in ranking and learning from pairwise comparisons have also attracted considerable attention; see, for example, \citep{H2019, N2023}. A comprehensive treatment of the statistical aspects of paired comparisons, together with combinatorial methods, experimental designs, and selection and ranking procedures, is provided in the monograph by Herbert A.~David \citep{D1988}.

The behavior of extreme scores in round-robin tournaments has received relatively little explicit attention in the literature, although the earliest result known to us appears already in \citep{H1963}. The investigation of such behavior is closely related to negative dependence among the scores, which we discuss next, together with relevant references, previous results, and a precise problem formulation.

We define the following notions of negative dependence. See \cite{JP1983} and the references therein for details.
Throughout this paper, the terms increasing and decreasing are understood to mean nondecreasing and nonincreasing, respectively.

\begin{definition}(\cite{JP1983}, Definition~2.3)\label{def:NOD}
The random variables $S_1,\ldots,S_n$, or the vector ${\bf S}=(S_1,\ldots,S_n)$, are said to be \textit{negatively lower orthant dependent} (NLOD) if, for all $s_1,\ldots,s_n\in\mathbb{R}$,
\begin{equation}\label{eq:L}
\mP\left(S_1 \le s_1,\ldots,S_n \le s_n\right)
\le
\mP\left(S_1 \le s_1\right)\cdots \mP\left(S_n \le s_n\right),
\end{equation}
and \textit{negatively upper orthant dependent} (NUOD) if
\begin{equation}\label{eq:U}
\mP\left(S_1 > s_1,\ldots,S_n > s_n\right)
\le
\mP\left(S_1 > s_1\right)\cdots \mP\left(S_n > s_n\right).
\end{equation}
\textit{Negative orthant dependence} (NOD) is said to hold if both \eqref{eq:L} and \eqref{eq:U} hold.
\end{definition}

The notion of negative association was studied early in \cite{AS1981}; we recall the standard definition in the form given in \cite{JP1983}.

\begin{definition}(\cite{JP1983}, Definition 2.1)\label{def:NA}
	The random variables $S_1,\ldots,S_n$ or the vector ${\bf S}=(S_1,\ldots,S_n)$ are said to be {\it negatively associated} (NA) if for every pair of disjoint subsets
	$A_1, A_2$ of $\left\{1,2,\ldots,n\right\}$,
	\begin{equation*}
%\label{eq:cov}
	\mathrm{Cov}\left(f_1(S_i, i\in A_1), f_2(S_j, j\in A_2)\right)\leq 0,
	\end{equation*}
	whenever $f_1$ and $f_2$ are real-valued functions,  increasing  in all coordinates.
\end{definition}
NA implies NOD  (see \cite{JP1983}). 

For the classical round-robin tournament model with
$p_{ij}=\mathbb{P}(X_{ij}=1)=\tfrac{1}{2}$, Huber \citep{H1963}
proved that
\begin{equation}
\label{eq:Huber}
s_{(n)}^{\star}(n)-\sqrt{2\log(n-1)} \xrightarrow{\mathbb{P}} 0
\quad \text{as } n\to\infty,
\end{equation}
where $\log(x)$ denotes the natural logarithm. One of the key elements in his proof was establishing the NLOD property~\eqref{eq:L} of the score vector
$\bigl(s_1(n), s_2(n), \ldots, s_n(n)\bigr)$ via a coupling argument.
For round-robin tournaments with repeated games, where
$X_{ij}\sim \mathrm{Bin}(r_{ij},p_{ij})$ independently for all $i<j$ and
$X_{ij}+X_{ji}=r_{ij}$, Ross \citep{R2021} established the NLOD property~\eqref{eq:L} for the score vector
$\bigl(s_1(n), s_2(n), \ldots, s_n(n)\bigr)$, which allows the study of probabilities of the form
$\mathbb{P}\!\left(s_i(n)>\max_{j\neq i}s_j(n)\right)$.
Under the model of \cite{R2021}, with integer-valued $X_{ij}$ but allowing for an arbitrary discrete distribution in place of the binomial one, \cite{MM2022} established the NLOD property~\eqref{eq:L} for the score vector
$\bigl(s_1(n), s_2(n), \ldots, s_n(n)\bigr)$ and used this property to derive a Huber-type result of the form~\eqref{eq:Huber}.
For the chess round-robin model with equally strong players,
\cite{M2021a,M2021b} obtained the asymptotic distributions of the maximal score
$s_{(n)}(n)$, the second maximal score $s_{(n-1)}(n)$, and, more generally, of all order statistics down to $s_{(1)}(n)$.
Finally, \cite{E1967} stated, without proof, that in a classical round-robin tournament with equally strong players, the probability that there is a unique winner with the maximal score tends to $1$ as $n\to\infty$. This conjecture was proved in \cite{MM2024}. More recently, \cite{AM2026} extended this result to the case in which the match outcomes $X_{ij}$ take values in a countable set $D\subset[0,1]$, and \cite{M2026} identified, as a function of $n$, the size of the sets of the largest and smallest scores that are unique with probability tending to one.

It is interesting to note that related types of problems have been extensively studied in the theory of random graphs, where the nature of dependence is, in a sense, opposite to that arising in round-robin tournaments, which exhibit negative dependence. A round-robin tournament can be represented as a complete oriented graph on $n$ vertices, where the vertices correspond to players. For each unordered pair of distinct vertices $i$ and $j$, exactly one directed edge is present: it is oriented from $i$ to $j$ if $X_{ij}=a$, and from $j$ to $i$ if $X_{ji}=1-a$.
By contrast, in the Erd\H{o}s--R\'enyi random graph model \citep{G1959, ER1959, ER1960}, there are $n$ vertices, and each unordered pair of distinct vertices $i$ and $j$ is connected by an undirected edge independently with probability $p$ (which may depend on $n$). If an edge is present, no orientation is assigned. The seminal papers \cite{ER1959,ER1961} initiated the study of the asymptotic behavior of extreme degrees (maximal and minimal) in random graphs. Subsequently, \cite{I1973} studied the asymptotic marginal distributions of the order statistics of vertex degrees in the sparse regime where $p=p(n)\to 0$ as $n\to\infty$, while Bollob\'as \citep{B1980,B1981} independently established corresponding results for fixed $p$, with further generalizations given in \citep{B2001}; see also \citep{VZ2024,M2024}.

In this paper, we extend previous results on the asymptotic distributions of the order statistics $s_{(i)}(n)$, $i=1,\ldots,n$, obtained in \citep{M2021a,M2021b} for the chess round-robin tournament model, to a more general setting in which the match outcomes $X_{ij}$ take values in a countable set $D\subset[0,1]$. In addition, we establish rates of convergence.

The remainder of the paper is organized as follows. In Section~2 we introduce the general round-robin tournament model with countable outcomes and provide additional notation and technical facts. Section~3 presents the main results, including the asymptotic distributions of the order statistics and rates of convergence. Section~4 contains the proofs of the main results. Appendices~A, B, C provide proofs of auxiliary results and necessary lemmas, theorems used in Section~4.

\section{Model and Additional Notation}

\noindent{\bf Model $M_{[0,1]}$.}
Let $D\subset[0,1]$ be a countable set of possible values of $X_{ij}$, and assume that $|D|>1$.
We consider a round-robin tournament in which, for all $i\neq j$,
\begin{equation}
\label{model:Basic}
X_{ij}+X_{ji}=1,
\quad
X_{ij}\in D.
\end{equation}
In particular, $a\in D$ implies $1-a\in D$.

We assume that the players are equally strong and each value can be attained with positive probability, that is,
\begin{equation}
\label{eq:es}
0<\mathbb{P}\!\left(X_{ij}=a\right)
=
\mathbb{P}\!\left(X_{ji}=a\right),
\quad \text{for all } a\in D.
\end{equation}
Under \eqref{model:Basic}, the assumption \eqref{eq:es} is equivalent to the distribution of $X_{ij}$ being symmetric about $1/2$: for every $a\in D$,
\begin{equation}
\label{eq:symD}
0<\mathbb{P}(X_{ij}=a)=\mathbb{P}(X_{ij}=1-a),
\quad 1\le i<j\le n.
\end{equation}
Consequently, the scores $s_1(n),\ldots,s_n(n)$ are identically distributed.

Under Model $M_{[0,1]}$, let
\begin{equation*}
%\label{eq:mom}
\mu=\mathbb{E}(X_{ij}),\quad \sigma=\sqrt{\mathrm{Var}(X_{ij})}.
\end{equation*}
Let
\begin{equation}\label{eq:ab}
a_n=(2\log n)^{-1/2},\quad
b_n=(2\log n)^{1/2}-\frac{\log\log n+\log(4\pi)}{2(2\log n)^{1/2}},
\end{equation}
and for $t\in\mathbb{R}$ define
\begin{equation}
\label{eq:xnt}
x_n(t)=b_n+a_n t
=(2\log n)^{1/2}+\frac{t-\frac{\log\log n+\log(4\pi)}{2}}{(2\log n)^{1/2}}.
\end{equation}

Under Model $M_{[0,1]}$, the normalized score of player $i$ is
\begin{equation*}
%\label{eq:star-score}
s^{\star}_i(n)=\frac{s_i(n)-(n-1)\mu}{\sqrt{n-1}\,\sigma}.
\end{equation*}
Define the exceedance indicators and their count:
\begin{equation}
\label{eq:W}
I_i^{(n)}(t):=\mathbf{1}\{s^{\star}_i(n)>x_n(t)\},\quad
W_n(t):=\sum_{i=1}^n I_i^{(n)}(t).
\end{equation}
Write
\begin{equation}
\label{eq:pw}
p_n(t):=\mathbb{P}(s^{\star}_i(n)>x_n(t)),\quad
\lambda_n(t):=\mathbb{E}[W_n(t)]=np_n(t).
\end{equation}

The total variation distance between the distributions of random elements $X$ and $Y$ is denoted $d_{\mathrm{TV}}(X,Y)$ and defined by
$$
d_{\mathrm{TV}}(X,Y)=
\frac{1}{2}\sum_{k\ge 0}
\left|\mathbb{P}(Y=k)-\mathbb{P}(X=k)\right|.
$$

Fix an integer $j\ge 0$ and define
\begin{equation*}
%\label{eq:Mnj}
M_{n,j}:=\frac{s_{(n-j)}^{\star}(n)-b_n}{a_n}.
\end{equation*}

The notation $f_n\sim g_n$ denotes asymptotic equivalence, that is,
$$
\lim_{n\to\infty}\frac{f_n}{g_n}=1.
$$

%%%%%%%%%%%%%%%%%%%%%%%%%%%%%%%%%%%%%%%%%%%%%%%%%%%%%%%%
%%%%%%%%%%%%%%%%%%%%%%%%%%%%%%%%%%%%%%%%%%%%%%%%%%%%%%%%
\section{Main Results}

\begin{thm}
\label{thm:TV}
For every fixed $t\in\mathbb{R}$, there exists a constant $C(t)<\infty$ such that
\begin{equation}
\label{eq:TV_Poi_et}
d_{\mathrm{TV}}\bigl(\mathcal L(W_n(t)),\,\mathrm{Poisson}(e^{-t})\bigr)
\le
C(t)\,
\frac{(\log\log n)^2}{\log n},
\quad n\to\infty.
\end{equation}
\end{thm}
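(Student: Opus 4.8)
The plan is to run the Stein--Chen method for Poisson approximation, using the negative dependence built into the model, and then to control separately the two errors it produces: the residual covariance term $\lambda_n(t)-\mathrm{Var}\bigl(W_n(t)\bigr)$ and the gap $|\lambda_n(t)-e^{-t}|$ between the mean of $W_n(t)$ and its limit. \emph{Step 1 (coupling and Stein--Chen).} Fix $t$ and set $c_n=(n-1)\mu+\sqrt{n-1}\,\sigma\,x_n(t)$, so that $I_i^{(n)}(t)=\mathbf 1\{s_i(n)>c_n\}$. The event $\{s_i(n)>c_n\}$ is increasing in the family $\{X_{ij}:j\neq i\}$, which is independent of all matches not involving $i$; by FKG its conditional law given $\{s_i(n)>c_n\}$ stochastically dominates, coordinatewise, its unconditional law, so by Strassen's theorem one may couple so that each $X_{ij}$ is replaced by some $\widehat X_{ij}\ge X_{ij}$ while the matches not involving $i$ are left unchanged. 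Since $X_{ji}=1-X_{ij}$, every competing score then weakly decreases, so $J_{ji}:=\mathbf 1\{\widehat s_j(n)>c_n\}\le I_j^{(n)}(t)$: this is Huber's coupling \citep{H1963} adapted to Model $M_{[0,1]}$, and it exhibits $I_1^{(n)}(t),\dots,I_n^{(n)}(t)$ as negatively related. The Stein--Chen bound for negatively related indicators then yields
\begin{equation}
\label{eq:SCplan}
d_{\mathrm{TV}}\bigl(\mathcal L(W_n(t)),\mathrm{Poisson}(\lambda_n(t))\bigr)\le\min\bigl(1,\lambda_n(t)^{-1}\bigr)\bigl(\lambda_n(t)-\mathrm{Var}(W_n(t))\bigr).
\end{equation}
(The generic dependency-graph form of Stein--Chen is useless here, since $I_i^{(n)}(t)$ and $I_j^{(n)}(t)$ are dependent for \emph{every} pair $\{i,j\}$, which would make that bound of order $n^2\,\mathbb E[I_1^{(n)}(t)I_2^{(n)}(t)]=\Theta(1)$; exploiting the negative relation is what makes the theorem possible.) As $\lambda_n(t)\to e^{-t}>0$, the factor $\min(1,\lambda_n(t)^{-1})$ stays bounded by a constant depending on $t$; so by \eqref{eq:SCplan}, the triangle inequality and $d_{\mathrm{TV}}(\mathrm{Poisson}(\lambda),\mathrm{Poisson}(\mu))\le|\lambda-\mu|$, the theorem reduces to $\lambda_n(t)-\mathrm{Var}(W_n(t))=O\bigl(\log n/n\bigr)$ and $|\lambda_n(t)-e^{-t}|=O\bigl((\log\log n)^2/\log n\bigr)$.

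\emph{Step 2 (the covariance term).} By exchangeability and negative relation, $\lambda_n(t)-\mathrm{Var}(W_n(t))=np_n(t)^2+n(n-1)\bigl(p_n(t)^2-\mathbb P(s_1(n)>c_n,\,s_2(n)>c_n)\bigr)$, and $np_n(t)^2=\lambda_n(t)^2/n=O(1/n)$. For the pair term decompose $s_1(n)=X_{12}+A$, $s_2(n)=(1-X_{12})+B$ with $A=\sum_{j\ge3}X_{1j}$, $B=\sum_{j\ge3}X_{2j}$ and $U:=X_{12}$ mutually independent; writing $\bar G$ for the common survival function of $A,B$ and $g(u):=\bar G(c_n-u)$, one gets $\mathbb P(s_1(n)>c_n,\,s_2(n)>c_n)=\mathbb E[g(U)g(1-U)]$ and $p_n(t)=\mathbb E[g(U)]=\mathbb E[g(1-U)]$, so the pair term equals $-\mathrm{Cov}_U\bigl(g(U),g(1-U)\bigr)\ge0$. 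Since $g$ is increasing and $U\in[0,1]$, both $g(U)$ and $g(1-U)$ lie in $[\bar G(c_n),\bar G(c_n-1)]$, so the pair term is at most $\tfrac14\bigl(\bar G(c_n-1)-\bar G(c_n)\bigr)^2=\tfrac14\,\mathbb P\bigl(A\in(c_n-1,c_n]\bigr)^2$. It thus suffices to prove $\mathbb P(A\in(c_n-1,c_n])=O\bigl(n^{-3/2}\sqrt{\log n}\bigr)$, which makes $n(n-1)$ times the pair term $O(\log n/n)$. To this end, exponentially tilt the law of $X$ by the $\theta_n>0$ whose tilted mean is $c_n/(n-2)$ (here $\theta_n=\Theta(\sqrt{\log n/n})\to0$, since $c_n/(n-2)=\mu+\Theta(\sqrt{\log n/n})$); with $M(\theta)=\mathbb E[e^{\theta X}]$, the change-of-measure identity, the bound $e^{-\theta_nA}\le e^{-\theta_n(c_n-1)}$ on $\{A>c_n-1\}$, a Taylor expansion at $\mu$ of the Legendre transform of $\log M$ (valid since $c_n/(n-2)-\mu$ is in the moderate range), and the concentration bound $\sup_x\mathbb P(A\in(x,x+1])=O(n^{-1/2})$ for the tilted sum (Kolmogorov--Rogozin/Esseen, uniformly in $n$, using only that $X$ is non-degenerate) give $\mathbb P(A\in(c_n-1,c_n])\le M(\theta_n)^{n-2}e^{-\theta_n(c_n-1)}\,O(n^{-1/2})=O\bigl(e^{-x_n(t)^2/2}\bigr)\,O(n^{-1/2})=O\bigl(n^{-3/2}\sqrt{\log n}\bigr)$, the last step because $e^{-x_n(t)^2/2}=\Theta\bigl(n^{-1}\sqrt{\log n}\,e^{-t}\bigr)$ (Step 3).

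\emph{Step 3 ($\lambda_n(t)\to e^{-t}$ and its rate).} Write $\lambda_n(t)=n\bar\Phi(x_n(t))\cdot p_n(t)/\bar\Phi(x_n(t))$ with $\bar\Phi=1-\Phi$. Since $s_1^{\star}(n)$ is a normalized sum of $n-1$ i.i.d.\ bounded variables whose distribution is symmetric about $\mu$ (all odd central moments vanish, so the skewness and hence the leading Cram\'er coefficient are $0$), Cram\'er's moderate-deviation theorem at the level $x_n(t)=\Theta(\sqrt{\log n})=o(n^{1/6})$ gives $p_n(t)/\bar\Phi(x_n(t))=1+O(\sqrt{\log n/n})$. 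On the deterministic side, substituting \eqref{eq:ab}--\eqref{eq:xnt} into $x_n(t)^2=b_n^2+2a_nb_nt+a_n^2t^2$ yields
\begin{equation*}
x_n(t)^2=2\log n-\log\log n-\log(4\pi)+2t+\frac{\bigl(\log\log n+\log(4\pi)-2t\bigr)^2}{8\log n},
\end{equation*}
and combining this with the Mills expansion $\bar\Phi(x)=\phi(x)x^{-1}\bigl(1-x^{-2}+O(x^{-4})\bigr)$ gives $n\bar\Phi(x_n(t))=e^{-t}\bigl(1+O((\log\log n)^2/\log n)\bigr)$, the dominant error coming from the $\bigl(\log\log n+\log(4\pi)-2t\bigr)^2/(8\log n)$ term in the exponent (so the rate is sharp for the normalization \eqref{eq:ab}). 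Multiplying the two factors gives $\lambda_n(t)=e^{-t}+O((\log\log n)^2/\log n)$; this computation also supplies $e^{-x_n(t)^2/2}=\Theta\bigl(n^{-1}\sqrt{\log n}\,e^{-t}\bigr)$ used in Step 2. Collecting Steps 1--3 yields \eqref{eq:TV_Poi_et} with $C(t)$ absorbing the constants from the Stein--Chen, tilting, moderate-deviation and Mills-ratio estimates.

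\emph{Where the difficulty lies.} Steps 1 and 3 are structurally essential but classical (Huber's coupling; Cram\'er moderate deviations; a Mills-ratio computation). The crux is the estimate $\mathbb P(A\in(c_n-1,c_n])=O(n^{-3/2}\sqrt{\log n})$ in Step 2: because $D$ is an arbitrary countable subset of $[0,1]$, the law of $A$ need be neither lattice nor absolutely continuous, so no local limit theorem is available, while the bare concentration-function bound $\mathbb P(A\in(c_n-1,c_n])\le\sup_x\mathbb P(A\in(x,x+1])=O(n^{-1/2})$ loses a factor $\sqrt n$. Recovering that factor—by pairing the moderate-deviation (tail) decay, obtained through exponential tilting, with the $O(n^{-1/2})$ concentration bound for the tilted sum—is the main technical point of the proof.
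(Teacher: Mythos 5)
Your proof is correct in substance and follows the same overall skeleton as the paper's (negative relation of the exceedance indicators $\Rightarrow$ Stein--Chen bound $\Rightarrow$ reduce to $\lambda_n(t)\to e^{-t}$ with rate, plus a bound on the off-diagonal covariance), but the two internal steps where the real work happens are handled by genuinely different arguments, and the comparison is worth recording.

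For the negative relation (your Step~1), the paper cites \cite[Proposition~1]{MR2022} for the negative association of the score vector, then invokes Property~$P_6$ of \cite{JP1983} and BHJ Theorem~2.I and Corollary~2.C.2. You instead rebuild the negative relation from scratch via Huber's monotone coupling and FKG/Strassen. This is equivalent in outcome and gives the same inequality \eqref{eq:SCplan}; the paper's route is shorter only because it outsources the coupling to a reference.

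For the covariance term (your Step~2), the divergence is more substantial. After conditioning on $U=X_{12}$, both proofs arrive at $p_n(t)^2-\mathbb P(I_1=1,I_2=1)=-\mathrm{Cov}_U\bigl(g(U),g(1-U)\bigr)$ in your notation (the paper's $A_n(Y)$, $B_n(Y)$). The paper then uses a symmetric/antisymmetric decomposition $M_n=(A_n+B_n)/2$, $\Delta_n=(A_n-B_n)/2$, applies Petrov's moderate-deviation theorem to replace the conditional survival function by $1-\Phi$ (Lemma~\ref{lem:lemma2_uq}), and Taylor-expands $\Phi$ to get the exact leading order $\mathrm{Cov}(I_1^{(n)},I_2^{(n)})\sim -2e^{-2t}\log n/n^3$; this needs the auxiliary Lemmas~\ref{lem:lemma2_uq} and~\ref{lem:lemma3_var_bound}. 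You instead bound the covariance above by $\tfrac14\bigl(\bar G(c_n-1)-\bar G(c_n)\bigr)^2$ using Popoviciu's variance bound plus Cauchy--Schwarz, reducing the whole estimate to the single local quantity $\mathbb P(A\in(c_n-1,c_n])$, which you control by exponential tilting combined with a concentration-function inequality rather than a local limit theorem. This is a cleaner and shorter route to the order $O(\log n/n)$ needed here, at the cost of losing the exact constant (which the theorem does not require). Your remark that $D$ being a countable but not necessarily lattice set forecloses a local CLT and forces a concentration-function argument is an accurate diagnosis of the difficulty; the paper circumvents it differently, by comparing $q_n$ with $1-\Phi$ at two nearby arguments and cancelling, so that no per-interval probability is ever estimated.

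Your Step~3 is the same as the paper's Proposition~\ref{pro:1}: Petrov/Cramér moderate deviations plus the Mills-ratio expansion of $1-\Phi(x_n(t))$, with the $(\log\log n)^2/\log n$ rate coming from the $\bigl(\log\log n+\log(4\pi)-2t\bigr)^2/(8\log n)$ term in $x_n(t)^2$. Your observation that symmetry kills the skewness is correct but unnecessary, since even without it the Petrov correction is $O((\log n)^{3/2}/\sqrt n)=o((\log\log n)^2/\log n)$.

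One technical loose end you should tighten: the Kolmogorov--Rogozin/Esseen concentration bound as you state it, $\sup_x\mathbb P(A\in(x,x+1])=O(n^{-1/2})$, does not follow directly from applying the inequality at scale $h=1$, because $X\in[0,1]$ forces $Q(X;1)=1$ and makes that application vacuous. You need to apply Rogozin's inequality at a scale $h$ strictly smaller than the diameter of $D$ (such an $h$ exists since $|D|>1$), which yields $Q(A;h)=O(n^{-1/2})$ under the tilted law uniformly for small $\theta_n$, and then pass to the unit interval by the covering bound $Q(A;1)\le\lceil 1/h\rceil\,Q(A;h)$. With that correction supplied, your argument is complete and gives the stated rate.
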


That is, $W_n(t)$ converges in total variation to a Poisson random variable with mean $\mathrm{e}^{-t}$, at a rate of order $(\log\log n)^2/\log n$. Convergence in total variation implies convergence in distribution (see, for example, \cite[Exercise~7.2.9, p.~362; Exercise~7.11.16, p.~401]{GS2020}), and therefore we obtain the following corollary.
\medskip

\begin{cor}
\label{cor:Poisson}
For every fixed $t\in\mathbb{R}$ and each fixed integer $k\ge 0$,
$$
\lim_{n\to\infty} \mathbb{P}\!\left(W_n(t)=k\right)
=
\mathrm{e}^{-\mathrm{e}^{-t}}\frac{\mathrm{e}^{-t k}}{k!}.
$$
Moreover, the convergence of $W_n(t)$ to a Poisson random variable with mean $\mathrm{e}^{-t}$ holds in total variation at a rate of order $(\log\log n)^2/\log n$.
\end{cor}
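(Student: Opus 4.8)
The plan is to establish Theorem~\ref{thm:TV}; Corollary~\ref{cor:Poisson} then follows at once, since convergence in total variation of $\mathcal L(W_n(t))$ to $\mathrm{Poisson}(\mathrm e^{-t})$ forces $\mathbb P(W_n(t)=k)\to\mathrm e^{-\mathrm e^{-t}}\mathrm e^{-tk}/k!$ for every fixed integer $k\ge 0$, and the rate assertion in the corollary is merely \eqref{eq:TV_Poi_et} restated. For the theorem I would first insert the intermediate Poisson law carrying the \emph{exact} mean $\lambda_n(t)=np_n(t)$ of $W_n(t)$ (see \eqref{eq:W}--\eqref{eq:pw}) and apply the triangle inequality,
\[
d_{\mathrm{TV}}\bigl(\mathcal L(W_n(t)),\mathrm{Poisson}(\mathrm e^{-t})\bigr)\le d_{\mathrm{TV}}\bigl(\mathcal L(W_n(t)),\mathrm{Poisson}(\lambda_n(t))\bigr)+d_{\mathrm{TV}}\bigl(\mathrm{Poisson}(\lambda_n(t)),\mathrm{Poisson}(\mathrm e^{-t})\bigr),
\]
bounding the second summand by $|\lambda_n(t)-\mathrm e^{-t}|$ via the classical estimate $d_{\mathrm{TV}}(\mathrm{Poisson}(\lambda),\mathrm{Poisson}(\nu))\le|\lambda-\nu|$. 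The crux is that this second summand carries the claimed rate $(\log\log n)^2/\log n$, while the first is of strictly smaller order.

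To estimate $\lambda_n(t)=n\,\mathbb P(s^\star_i(n)>x_n(t))$, note that for fixed $i$ the variables $X_{ij}$, $j\ne i$, are i.i.d., bounded, with mean $\mu$ and variance $\sigma^2>0$, so $s^\star_i(n)$ is a normalized sum of $n-1$ i.i.d.\ bounded summands, and by \eqref{eq:xnt} the level $x_n(t)=(2\log n)^{1/2}(1+o(1))$ lies well inside the moderate-deviation range $x=o\bigl((n-1)^{1/6}\bigr)$. Bounded summands satisfy Cram\'er's condition, so the Cram\'er--Petrov moderate-deviation expansion gives, uniformly for $1\le x=o(\sqrt{n-1})$,
\[
\frac{\mathbb P(s^\star_i(n)>x)}{1-\Phi(x)}=\exp\bigl\{O\bigl(x^3(n-1)^{-1/2}\bigr)\bigr\}\,\bigl(1+O\bigl((1+x)(n-1)^{-1/2}\bigr)\bigr);
\]
with $x=x_n(t)$ this ratio is $1+O\bigl((\log n)^{3/2}n^{-1/2}\bigr)$, which is negligible against $(\log\log n)^2/\log n$. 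It remains to expand $n\bigl(1-\Phi(x_n(t))\bigr)$: Mills' ratio gives $1-\Phi(x)=\varphi(x)\,x^{-1}(1+O(x^{-2}))$, and inserting the expansion of $x_n(t)^2$ read off from \eqref{eq:ab}--\eqref{eq:xnt} — in which the term $\tfrac{(\log\log n+\log 4\pi)^2}{8\log n}$ coming from $b_n^2$ dominates all other corrections (the cross terms being $O(\log\log n/\log n)$, the Mills correction $O(1/\log n)$) — yields $n\bigl(1-\Phi(x_n(t))\bigr)=\mathrm e^{-t}\bigl(1+O((\log\log n)^2/\log n)\bigr)$. Combining the two estimates, $\lambda_n(t)=\mathrm e^{-t}\bigl(1+O((\log\log n)^2/\log n)\bigr)$, which bounds the second summand as required.

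For the first summand I would run the Chen--Stein method for sums of negatively dependent indicators. The score vector $\bigl(s_1(n),\dots,s_n(n)\bigr)$ is negatively associated in the sense of Definition~\ref{def:NA} (each edge variable enters one score with a $+1$ coefficient and the other with a $-1$ coefficient, the edge variables being independent; this is the structure underlying the NLOD/NUOD property of Definition~\ref{def:NOD} established by coupling in \citep{H1963,R2021}), so the indicators $I_i^{(n)}(t)$ of \eqref{eq:W} are negatively related, $\mathrm{Cov}\bigl(I_i^{(n)}(t),I_j^{(n)}(t)\bigr)\le 0$, and $\mathrm{Var}(W_n(t))\le\lambda_n(t)$. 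The Poisson-approximation bound for negatively related indicators then gives
\[
d_{\mathrm{TV}}\bigl(\mathcal L(W_n(t)),\mathrm{Poisson}(\lambda_n(t))\bigr)\le\bigl(1\wedge\lambda_n(t)^{-1}\bigr)\Bigl(\lambda_n(t)-\mathrm{Var}(W_n(t))+2{\textstyle\sum_{i=1}^n}p_n(t)^2\Bigr),
\]
and it suffices to show the bracket is $o\bigl((\log\log n)^2/\log n\bigr)$. Here $\sum_i p_n(t)^2=n p_n(t)^2=\lambda_n(t)p_n(t)=O(1/n)$, and $\lambda_n(t)-\mathrm{Var}(W_n(t))=\sum_i p_n(t)^2+\sum_{i\ne j}\bigl(-\mathrm{Cov}(I_i^{(n)}(t),I_j^{(n)}(t))\bigr)$. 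For the covariances, conditioning on the shared edge $X_{ij}$ makes $s^\star_i(n)$ and $s^\star_j(n)$ independent; since varying $X_{ij}\in[0,1]$ perturbs the two thresholds by $O(1)$, i.e.\ by $O(1/\sqrt n)$ on the normalized scale, and we sit in a tail at level $\asymp(2\log n)^{1/2}$, the conditional tail probabilities change only by a factor $1+O(\sqrt{\log n/n})$, whence $\mathbb P\bigl(I_i^{(n)}(t)=I_j^{(n)}(t)=1\bigr)=p_n(t)^2\bigl(1+O(\sqrt{\log n/n})\bigr)$ and $\sum_{i\ne j}\bigl|\mathrm{Cov}\bigr|=O\bigl(n^2 p_n(t)^2\sqrt{\log n/n}\bigr)=O(\sqrt{\log n/n})$. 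Both $O(1/n)$ and $O(\sqrt{\log n/n})$ are $o\bigl((\log\log n)^2/\log n\bigr)$, so the first summand is negligible and the theorem follows with the stated rate.

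The main obstacle is the \emph{multiplicative} (relative) accuracy of the normal approximation to the tail $\mathbb P(s^\star_i(n)>x_n(t))$: ordinary and even non-uniform Berry--Esseen bounds are useless here, because $1-\Phi(x_n(t))\asymp\mathrm e^{-t}/n$ is far smaller than any additive Berry--Esseen error, so a genuine moderate-deviation theorem is indispensable — and it is available precisely because the match outcomes are bounded. Once this estimate and the negative-association property of the score sequence are in hand, the remaining steps — the Mills-ratio asymptotics, the expansion of $x_n(t)^2$, the Poisson--Poisson comparison, and the Chen--Stein step — are routine. I would therefore expect the moderate-deviation lemma and the negative-dependence property of the score vector to be the main auxiliary results recorded and proved in the appendices.
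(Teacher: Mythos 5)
Your proposal is correct, and the reduction of the corollary to Theorem~\ref{thm:TV} (total variation convergence $\Rightarrow$ pointwise convergence of probabilities, plus the rate being a restatement of \eqref{eq:TV_Poi_et}) is exactly what the paper does. Your sketch of the theorem itself also follows the paper's route: insert $\mathrm{Poisson}(\lambda_n(t))$ and apply the triangle inequality, control $|\lambda_n(t)-e^{-t}|$ via Petrov's Cram\'er-type moderate-deviation expansion together with the Mills-ratio expansion of $x_n(t)^2$ (this is the paper's Proposition~\ref{pro:1} and gives the dominant $(\log\log n)^2/\log n$ term), and control $d_{\mathrm{TV}}(\mathcal L(W_n(t)),\mathrm{Poisson}(\lambda_n(t)))$ via negative association of the scores and a Stein--Chen bound. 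The one genuine difference is in the second step: the paper computes the pair covariance \emph{exactly} to leading order (Proposition~\ref{pro:2}), obtaining $\mathrm{Cov}(I_1^{(n)},I_2^{(n)})=-2e^{-2t}n^{-3}\log n\,(1+o(1))$ and hence a first-term bound of order $\log n/n$, by conditioning on the shared edge $X_{12}$, splitting $q_n(\alpha_n(x_n\mp h))$ into even and odd parts in $h$, and using that the odd part has zero mean by the symmetry $Y\stackrel{d}{=}-Y$; you instead use the one-sided estimate that each conditional tail changes by a multiplicative $1+O(\sqrt{\log n/n})$, giving the weaker bound $\sum_{i\ne j}|\mathrm{Cov}|=O(\sqrt{\log n/n})$. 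Since $\sqrt{\log n/n}=o\bigl((\log\log n)^2/\log n\bigr)$ this still suffices, so the shortcut is harmless here, but it does not capture the sign or the true $\log n/n$ order that the paper's Proposition~\ref{pro:2} records. (Two small technical remarks: the Stein--Chen form you quote carries an extra $2\sum_i p_n(t)^2=O(1/n)$ relative to BHJ's Corollary~2.C.2, which is harmless; and the assertion of negative association of the score vector needs the reference to \cite[Prop.~1]{MR2022} rather than the informal ``$+1/-1$ coefficient'' heuristic, which by itself would only suggest NLOD/NUOD, not the full NA needed for the Stein--Chen step.)
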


Furthermore, recalling \eqref{eq:W}, we observe that
$$
\{W_n(t)\le j\}=\{s^{\star}_{(n-j)}(n)\le x_n(t)\}.
$$
Combining this observation with Corollary~\ref{cor:Poisson}, we obtain the following corollary.
\medskip

\begin{cor}
\label{cor:cor2}
For every fixed $t\in\mathbb{R}$ and each fixed integer $j\ge 0$,
\begin{equation}
\label{eq:M}
\lim_{n\to\infty}
\mathbb{P}\!\left(
\frac{s_{(n-j)}(n)-(n-1)\mu}{\sqrt{n-1}\,\sigma}
\le x_n(t)
\right)
=\exp(-e^{-t})\sum_{k=0}^{j}\frac{e^{-tk}}{k!},
\end{equation}
where $x_n(t)$ is defined in \eqref{eq:xnt}.
Moreover, the convergence in \eqref{eq:M} holds at a rate of order $(\log\log n)^2/\log n$.
\end{cor}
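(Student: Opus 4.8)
\textbf{Proof proposal for Corollary~\ref{cor:cor2}.}
The plan is to read off the statement from Corollary~\ref{cor:Poisson} (hence ultimately from Theorem~\ref{thm:TV}) together with the event identity already recorded just before the corollary. First I would note that, by the definition of the order statistics of the normalized scores and of $W_n(t)$ in \eqref{eq:W}, the number of players whose normalized score exceeds $x_n(t)$ is at most $j$ if and only if the $(j+1)$-st largest normalized score fails to exceed $x_n(t)$; that is,
\begin{equation*}
\{W_n(t)\le j\}=\{s^{\star}_{(n-j)}(n)\le x_n(t)\},
\end{equation*}
which holds regardless of ties among the scores. Since $s^{\star}_{(n-j)}(n)=\bigl(s_{(n-j)}(n)-(n-1)\mu\bigr)\big/\bigl(\sqrt{n-1}\,\sigma\bigr)$, the probability on the left-hand side of \eqref{eq:M} is exactly $\mathbb{P}(W_n(t)\le j)$.

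Next I would write $\mathbb{P}(W_n(t)\le j)=\sum_{k=0}^{j}\mathbb{P}(W_n(t)=k)$ and pass to the limit term by term. By Corollary~\ref{cor:Poisson} each summand satisfies $\mathbb{P}(W_n(t)=k)\to \exp(-e^{-t})\,e^{-tk}/k!$, and because the sum has a fixed finite number of terms, the limit of the sum equals the sum of the limits, which is $\exp(-e^{-t})\sum_{k=0}^{j}e^{-tk}/k!$. This is the convergence claimed in \eqref{eq:M}.

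For the rate I would invoke the elementary fact that total variation distance dominates the discrepancy of probabilities over every fixed event, applied to $A=\{0,1,\dots,j\}$: writing $N\sim\mathrm{Poisson}(e^{-t})$,
\begin{equation*}
\left|\mathbb{P}(W_n(t)\le j)-\mathbb{P}(N\le j)\right|\le d_{\mathrm{TV}}\bigl(\mathcal L(W_n(t)),\mathrm{Poisson}(e^{-t})\bigr)\le C(t)\,\frac{(\log\log n)^2}{\log n}
\end{equation*}
by Theorem~\ref{thm:TV}. Since $\mathbb{P}(N\le j)=\exp(-e^{-t})\sum_{k=0}^{j}e^{-tk}/k!$ coincides with the asserted limit, combining the last display with the event identity of the first paragraph gives \eqref{eq:M} with the stated rate (indeed with the same constant $C(t)$).

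There is essentially no genuine obstacle here: all the analytic content sits in Theorem~\ref{thm:TV}, and the corollary is a bookkeeping step. The only points that require care are the combinatorial identification $\{W_n(t)\le j\}=\{s^{\star}_{(n-j)}(n)\le x_n(t)\}$ (including the handling of ties, which do not affect it) and the observation that choosing the test set $A=\{0,\dots,j\}$ transfers the total variation rate from $W_n(t)$ to its lower tail, and hence to the distribution function of $s_{(n-j)}(n)$ evaluated at $x_n(t)$.
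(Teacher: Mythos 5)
Your proposal is correct and matches the paper's argument: the paper likewise derives Corollary~\ref{cor:cor2} from the event identity $\{W_n(t)\le j\}=\{s^{\star}_{(n-j)}(n)\le x_n(t)\}$ together with Corollary~\ref{cor:Poisson}, which in turn rests on Theorem~\ref{thm:TV}. Your explicit justification of the rate via $|\mathbb{P}(W_n(t)\le j)-\mathbb{P}(N\le j)|\le d_{\mathrm{TV}}(\mathcal L(W_n(t)),\mathrm{Poisson}(e^{-t}))$ is exactly the bookkeeping the paper leaves implicit, and it is correct.
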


%%%%%%%%%%%%%%%%%%%%%%%%%%%%%%%%%%%%%%%%%%%%%%%%%%%%%%%%%%%%%%%
\section{Numerics}

We illustrate Theorem~\ref{thm:TV} for the chess round--robin tournament model where $X_{ij} \in \left\{0,\tfrac12,1\right\}$ with
$
\mathbb P(X_{ij}=\tfrac12)=p,\,\,\,\
\mathbb P(X_{ij}=1)=\mathbb P(X_{ji}=0)=\frac{1-p}{2},
$
where $p\in\left\{1/6,\,1/3,\,2/3\right\}$.  
For each unordered pair $\{i,j\}$, the random vector $(X_{ij},X_{ji})$ is generated independently subject to $X_{ij}+X_{ji}=1$.
For each value of
$
n\in\left\{10,\,100,\,500,\,1000,\,10000,\,25000,\,50000,\,100000\right\},$
\,\,
$p\in\left\{1/6,\,1/3,\,2/3\right\},$
and
$t\in\left\{-1,\,0\right\},
$
we simulated the entire tournament, computed the normalized scores
$s_1^{\star}(n),\ldots,s_n^{\star}(n)$, and formed
$
W_n(t)=\sum_{i=1}^n \mathbf 1\{s_i^{\star}(n)>x_n(t)\},
$
where $x_n(t)$ is defined in \eqref{eq:xnt}.  
We repeated the entire procedure $MC$ times, obtained an empirical probability mass function of $W_n(t)$, and used it to estimate the total variation distance
$$
\widehat{d_{\mathrm{TV}}}\!\left(\mathcal L(W_n(t)),\mathrm{Poisson}(\mathrm e^{-t})\right)
=
\frac12\sum_{k\ge0}
\bigl|
\widehat{\mathbb P}(W_n(t)=k)
-
\mathbb P(\mathrm{Poisson}(\mathrm e^{-t})=k)
\bigr|,
$$
where $\widehat{\mathbb P}$ denotes the Monte--Carlo frequencies.
The resulting values are reported in Table~\ref{tab:tv_all_t}.  

It is important to note that each simulation required the generation and storage of an $n\times n$ matrix of match outcomes.  
For $n>10000$, this required the use of high-performance computing resources with a large amount of memory.
For example, for $n=100000$ a single tournament realization required approximately $580$~GB of memory, and $500$ Monte-Carlo replications required about $22$ hours of computation.

Although the accessible values of $n$ are restricted by computational constraints, the results are consistent with the convergence rate asserted in Theorem~\ref{thm:TV}.

\begin{table}[H]
\centering
\caption{Empirical total variation distance between the simulated law of $W_n(t)$
and its Poisson limit for the chess model with $p=1/6,\,1/3,\,2/3$.
Results are shown for two regimes: $t=-1$, and $t=0$.
$MC$ denotes the number of Monte--Carlo replications.}
\label{tab:tv_all_t}
\begin{tabular}{lll|lll|lll|lll|}
\hline
 &  &  & \multicolumn{3}{c}{$t=-1$}
 & \multicolumn{3}{c}{$t=0$}
 \\
\cline{4-6}\cline{7-9}
$n$
& $\dfrac{(\log\log n)^2}{\log n}$
& $MC$
& $p=\tfrac16$ & $p=\tfrac13$ & $p=\tfrac23$
& $p=\tfrac16$ & $p=\tfrac13$ & $p=\tfrac23$ \\
\hline
10     & 0.3021   &10000 & 0.3042 & 0.3821 & 0.2915 & 0.1612  & 0.1223 & 0.1362   \\
100    & 0.5064   &10000 & 0.1706 & 0.1622 &0.1206  & 0.0477  & 0.0749 & 0.0254   \\
500    & 0.5371   &10000 & 0.1360 & 0.1012 &0.0943  & 0.0476  & 0.0615 & 0.0350   \\
1000   & 0.5407   &10000 & 0.1597 & 0.1415 & 0.0932 & 0.0222  & 0.0396 & 0.0159   \\
10000  & 0.5353   &10000 & 0.0955 & 0.1121 & 0.1186 & 0.0310  & 0.0350 & 0.0228  \\
25000  & 0.5293   &1000  & 0.0994 & 0.0979 & 0.1008 & 0.0403  & 0.0238 & 0.0156   \\
50000  & 0.5241   &1000  & 0.1028 & 0.0848 & 0.1278 & 0.0497  & 0.0216 & 0.0322   \\
100000 & 0.5186   &500   & 0.1086 & 0.1166 & 0.1092 & 0.0341  & 0.0236 & 0.0716  \\
\hline
\end{tabular}
\end{table}

%%%%%%%%%%%%%%%%%%%%%%%%%%%%%%%%%%%%%%%%%%%%%%%%%%%%%%%%%%%%%%%%

\section{Proofs}
\begin{proof}[Proof of Theorem~\ref{thm:TV}]
Under Model $M_{[0,1]}$, it follows from \cite[Proposition~1]{MR2022} that the score vector
$\bigl(s_1(n),\ldots,s_n(n)\bigr)$ is negatively associated (see Definition~\ref{def:NA}).
Property~$P_6$ in \cite{JP1983} states that increasing functions defined on disjoint subsets of a negatively associated family are themselves negatively associated.

For each $i=1,\ldots,n$, the indicator
$
I_i^{(n)}(t)=\mathbf{1}\{s_i^\star(n)>x_n(t)\}
$
is an increasing function of the single coordinate $s_i(n)$
(since $s_i^\star(n)$ is obtained from $s_i(n)$ by centering and scaling).
Hence, by Property~$P_6$ in \cite{JP1983},
the family $\bigl(I_1^{(n)}(t),\ldots,I_n^{(n)}(t)\bigr)$ is negatively associated.

Using this fact, and combining Theorem~2.I (p.~30) and Corollary~2.C.2 (p.~36) of \cite{BHJ1992}, we obtain for every fixed $t\in\mathbb{R}$ the following total variation bound:
\begin{align}
\label{eq:TVU}
d_{\mathrm{TV}}\bigl(\mathcal L(W_n(t)),\,\mathrm{Poisson}(\lambda_n(t))\bigr)
&=
\frac{1}{2}\sum_{k\ge 0}
\left|
\mathbb{P}(W_n(t)=k)
-
\mathbb{P}(\mathrm{Poisson}(\lambda_n(t))=k)
\right|
\nonumber\\
&\le
\left(1-\mathrm{e}^{-\lambda_n(t)}\right)
\left(1-\frac{\mathrm{Var}(W_n(t))}{\lambda_n(t)}\right)
\nonumber\\
&=
\left(1-\mathrm{e}^{-\lambda_n(t)}\right)
\left[
\frac{\lambda_n(t)}{n}
-
\frac{n(n-1)\,\mathrm{Cov}\!\left(I_{1}^{(n)}(t), I_{2}^{(n)}(t)\right)}{\lambda_n(t)}
\right]
\nonumber\\
&\le
\frac{\lambda_n(t)}{n}
-
\frac{n(n-1)\,\mathrm{Cov}\!\left(I_{1}^{(n)}(t), I_{2}^{(n)}(t)\right)}{\lambda_n(t)}.
\end{align}
Here the equality follows from the fact that $s_1(n),\ldots,s_n(n)$ are identically distributed and from the definition of $\lambda_n(t)$ in \eqref{eq:pw}.

To evaluate the upper bound, we will need the following two propositions.
\smallskip

\begin{pro}
\label{pro:1}
For every fixed $t\in\mathbb{R}$,
\begin{equation*}
\lambda_n(t)=n\,p_n(t)
=
\mathrm{e}^{-t}
\left(
1+O\!\left(\frac{(\log\log n)^2}{\log n}\right)
\right),
\quad n\to\infty.
\end{equation*}
\end{pro}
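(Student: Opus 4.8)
The plan is to replace the tail probability $p_n(t)=\mathbb{P}\!\left(s_i^\star(n)>x_n(t)\right)$ by the standard Gaussian tail $1-\Phi(x_n(t))$, and then to evaluate the latter explicitly. Observe first that $s_i(n)=\sum_{j\neq i}X_{ij}$ is a sum of $n-1$ i.i.d.\ random variables supported in $[0,1]$, with mean $\mu$ and variance $\sigma^{2}>0$; the strict inequality holds because $|D|>1$ and, by \eqref{eq:symD}, the law of $X_{ij}$ is symmetric about $1/2$ and hence nondegenerate. Thus $s_i^\star(n)$ is a standardized sum of bounded i.i.d.\ summands, so Cram\'er's condition (finiteness of the moment generating function near the origin) holds trivially, and $x_n(t)=(2\log n)^{1/2}(1+o(1))$ lies well inside the moderate-deviation range $x=o(\sqrt{n-1})$.

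In the first step I invoke a uniform Cram\'er-type tail expansion (which I record as an auxiliary lemma in the appendix, in the form due to Petrov): for $0\le x=o(\sqrt{n-1})$,
\[
\frac{\mathbb{P}\!\left(s_i^\star(n)>x\right)}{1-\Phi(x)}
=\exp\!\Bigl(\frac{x^{3}}{\sqrt{n-1}}\,\gamma\!\Bigl(\frac{x}{\sqrt{n-1}}\Bigr)\Bigr)
\Bigl(1+O\!\Bigl(\frac{1+x}{\sqrt{n-1}}\Bigr)\Bigr),
\]
where $\gamma$ is the Cram\'er power series. Taking $x=x_n(t)=O(\sqrt{\log n})$ makes $x^{3}/\sqrt{n-1}=O\bigl((\log n)^{3/2}/\sqrt n\bigr)$ and $(1+x)/\sqrt{n-1}=O\bigl(\sqrt{\log n/n}\bigr)$, both of which vanish far more rapidly than $(\log\log n)^{2}/\log n$. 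Hence $p_n(t)=(1-\Phi(x_n(t)))\bigl(1+O(\sqrt{\log n/n})\bigr)$, and the error rate in the proposition is dictated entirely by the Gaussian term.

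In the second step I evaluate $n\bigl(1-\Phi(x_n(t))\bigr)$ using the Mills-ratio asymptotics $1-\Phi(x)=\dfrac{e^{-x^{2}/2}}{x\sqrt{2\pi}}\bigl(1+O(x^{-2})\bigr)$ as $x\to\infty$. Writing $L=2\log n$ and $c=\log\log n+\log(4\pi)$, the definitions \eqref{eq:ab}--\eqref{eq:xnt} give the exact identity $x_n(t)^{2}=L-c+2t+\dfrac{(c-2t)^{2}}{4L}$, so that
\[
e^{-x_n(t)^{2}/2}=\frac{e^{c/2}e^{-t}}{n}\,e^{-(c-2t)^{2}/(8L)}
=\frac{2\sqrt{\pi\log n}\;e^{-t}}{n}\Bigl(1+O\!\Bigl(\frac{(\log\log n)^{2}}{\log n}\Bigr)\Bigr),
\]
using $e^{c/2}=2\sqrt{\pi\log n}$ and $(c-2t)^{2}/(8L)=O((\log\log n)^{2}/\log n)$. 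Since $x_n(t)=\sqrt{L}\bigl(1+O(\log\log n/\log n)\bigr)$ and $O(x_n(t)^{-2})=O(1/\log n)$, dividing by $x_n(t)\sqrt{2\pi}=2\sqrt{\pi\log n}\bigl(1+O(\log\log n/\log n)\bigr)$ yields $1-\Phi(x_n(t))=\dfrac{e^{-t}}{n}\bigl(1+O((\log\log n)^{2}/\log n)\bigr)$, i.e.\ $n(1-\Phi(x_n(t)))=e^{-t}\bigl(1+O((\log\log n)^{2}/\log n)\bigr)$. Multiplying by the correction factor from the first step completes the proof.

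The main obstacle is the first step: one needs a moderate-deviation estimate that is genuinely uniform in $x$ over the relevant range and valid for bounded summands whose law may be supported on a lattice (e.g.\ when $D\subseteq D_k$). The lattice case contributes only an additional $O(1/\sqrt n)$-type term, which is harmless, but stating and invoking the correct version requires some care; alternatively, since $x_n(t)=O(\sqrt{\log n})=o(n^{1/6})$, one may instead appeal to a quantitative version of the classical moderate-deviation regime for sums with a finite third moment. By contrast, the second step is routine bookkeeping: the choice of $a_n,b_n$ in \eqref{eq:ab} is engineered precisely so that the leading corrections cancel and the residual error is the $\frac{(c-2t)^{2}}{4L}$ term, i.e.\ of exact order $(\log\log n)^{2}/\log n$.
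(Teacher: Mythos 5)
Your proof is correct and follows essentially the same route as the paper: invoke Petrov's Cram\'er-type moderate-deviation theorem to replace $p_n(t)$ by the Gaussian tail with a negligible correction, then evaluate $1-\Phi(x_n(t))$ via Mills' ratio and a direct expansion of $x_n(t)^2$, with the dominant $(\log\log n)^2/\log n$ error coming from the $(c-2t)^2/(4L)$ remainder. Your concern about lattice-supported summands is unnecessary here: Petrov's Theorem~1 in \S VIII.2 (the tail-ratio form used in the paper's Theorem~\ref{th:petrov}) requires only Cram\'er's condition and $x=o(\sqrt n)$, with no non-lattice hypothesis, so it applies verbatim.
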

\begin{proof}
Appendix \ref{app:B}.
\end{proof}
\smallskip

\begin{pro}
\label{pro:2}
For every fixed $t\in\mathbb{R}$,
\begin{equation*}
\mathrm{Cov}\!\left(I_1^{(n)}(t), I_2^{(n)}(t)\right)
=
-\frac{2\,\mathrm{e}^{-2t}\log n}{n^3}
\left(
1+O\!\left(\frac{(\log\log n)^2}{\log n}\right)
\right),
\quad n\to\infty.
\end{equation*}
\end{pro}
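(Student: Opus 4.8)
The plan is to compute $\mathrm{Cov}\bigl(I_1^{(n)}(t),I_2^{(n)}(t)\bigr)$ exactly by conditioning on the single match $X_{12}$ shared by players $1$ and $2$, and then to read off its leading order from a local (moderate-deviation) estimate for the survival function of a sum of $n-2$ bounded i.i.d.\ summands. Put $\tau_n:=(n-1)\mu+\sqrt{n-1}\,\sigma\,x_n(t)$, so that $I_i^{(n)}(t)=\mathbf{1}\{s_i(n)>\tau_n\}$, write $q_a:=\mathbb{P}(X_{12}=a)$ for $a\in D$, and decompose $s_1(n)=X_{12}+U_1$ and $s_2(n)=(1-X_{12})+U_2$ with $U_1:=\sum_{j=3}^n X_{1j}$, $U_2:=\sum_{j=3}^n X_{2j}$. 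By independence of the match vectors, $X_{12}$, $U_1$, $U_2$ are mutually independent, and $U_1$, $U_2$ are identically distributed (each a sum of $n-2$ i.i.d.\ copies of $X_{12}$). Conditioning on $X_{12}=a$ gives
\begin{equation*}
\mathbb{P}\bigl(s_1(n)>\tau_n,\ s_2(n)>\tau_n\bigr)=\sum_{a\in D}q_a\,g_a\,g_{1-a},\qquad g_a:=\mathbb{P}(U_1>\tau_n-a),
\end{equation*}
while $p_n(t)=\mathbb{P}(s_1(n)>\tau_n)=\sum_{a\in D}q_a g_a=:\bar g$. Writing $g_a=\bar g+\delta_a$ (so $\sum_{a\in D}q_a\delta_a=0$) and using the symmetry $q_a=q_{1-a}$ from \eqref{eq:symD} (which also forces $\sum_{a\in D}q_a\delta_{1-a}=0$), the cross terms cancel and one obtains the exact identity
\begin{equation*}
\mathrm{Cov}\bigl(I_1^{(n)}(t),I_2^{(n)}(t)\bigr)=\sum_{a\in D}q_a\,\delta_a\,\delta_{1-a}.
\end{equation*}

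Next I would pin down $\delta_a$ to leading order. After centering and scaling, $g_a=\mathbb{P}\bigl(U_1^{\star}>y_n(a)\bigr)$, where $y_n(a)=\sqrt{(n-1)/(n-2)}\;x_n(t)+(\mu-a)/(\sqrt{n-2}\,\sigma)$ is affine in $a$ with slope $-1/(\sqrt{n-2}\,\sigma)$; since $a\in[0,1]$, every point $y_n(a)$ lies within $O(n^{-1/2})$ of $x_n(t)\sim\sqrt{2\log n}$. The crucial estimate is that, uniformly over $a,b\in D$,
\begin{equation*}
g_a-g_b=\frac{\phi\bigl(x_n(t)\bigr)}{\sqrt{n-2}\,\sigma}\,(a-b)\,\bigl(1+o(1)\bigr),
\end{equation*}
that is, the tail of $U_1$ decays locally near $\tau_n$ at the rate set by the (essentially Gaussian) density there; this is supplied by a moderate-deviation local limit estimate for $U_1$ in the range $y=O(\sqrt{\log n})$, a refinement of the tail expansion used to prove Proposition~\ref{pro:1}. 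Averaging over $b$ against $q_b$ and using $\sum_{b\in D}q_b b=\mu$ yields $\delta_a=\kappa_n(a-\mu)+o(\kappa_n)$ uniformly in $a$, where $\kappa_n:=\phi(x_n(t))/(\sqrt{n-2}\,\sigma)$.

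Substituting this into the covariance identity and using $\mu=\tfrac12$ --- so that $1-a-\mu=-(a-\mu)$, which is exactly what makes the covariance negative --- gives $\delta_a\,\delta_{1-a}=-\kappa_n^2(a-\mu)^2+o(\kappa_n^2)$ uniformly, hence
\begin{equation*}
\mathrm{Cov}\bigl(I_1^{(n)}(t),I_2^{(n)}(t)\bigr)=-\kappa_n^2\sum_{a\in D}q_a(a-\mu)^2+o(\kappa_n^2)=-\frac{\phi\bigl(x_n(t)\bigr)^2}{n-2}\,\bigl(1+o(1)\bigr),
\end{equation*}
since $\sum_{a\in D}q_a(a-\mu)^2=\mathrm{Var}(X_{12})=\sigma^2$ and $\kappa_n^2\sigma^2=\phi(x_n(t))^2/(n-2)$. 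It then remains to rewrite $\phi(x_n(t))^2=\tfrac{1}{2\pi}e^{-x_n(t)^2}$: plugging \eqref{eq:ab}--\eqref{eq:xnt} into the exponent yields $x_n(t)^2=2\log n-\log\log n-\log(4\pi)+2t+O\bigl((\log\log n)^2/\log n\bigr)$ (the same computation that drives Proposition~\ref{pro:1}), whence $\phi(x_n(t))^2=\tfrac{1}{2\pi}\cdot\tfrac{4\pi\,e^{-2t}\log n}{n^2}\bigl(1+O((\log\log n)^2/\log n)\bigr)=\tfrac{2e^{-2t}\log n}{n^2}\bigl(1+O((\log\log n)^2/\log n)\bigr)$; combined with $(n-2)^{-1}=n^{-1}(1+O(1/n))$ this is precisely the claimed $-\tfrac{2e^{-2t}\log n}{n^3}\bigl(1+O((\log\log n)^2/\log n)\bigr)$, all remaining error terms being of strictly smaller order ($O((\log n)^{3/2}n^{-1/2})$ from the local estimate, $O(n^{-1})$ from the Taylor remainders).

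The hard part is the local estimate for $g_a-g_b$ at sufficient precision. Here $g_a$ and $g_b$ are tail probabilities of the discrete sum $U_1$ at two points $y_n(a),y_n(b)$ that are only $O(n^{-1/2})$ apart but lie far out in the moderate-deviation range $\sim\sqrt{2\log n}$, so one cannot apply a Berry--Esseen bound term by term: its $O(n^{-1/2})$ error would swamp the genuine difference $\kappa_n(a-b)\asymp\sqrt{\log n}\,n^{-3/2}$. Instead one must control $\mathbb{P}\bigl(U_1\in(\tau_n-a,\tau_n-b]\bigr)$ directly via a moderate-deviation local limit theorem for sums of bounded i.i.d.\ summands, and in the general countable-$D$ setting (where $U_1$ need not be lattice-valued) some additional care is required. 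This is the natural place to invoke an auxiliary lemma established in the appendices.
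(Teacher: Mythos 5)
Your proposal is correct and arrives at the paper's answer by an algebra that is a little cleaner than what the paper actually does, while leaning on the same hard analytic input. Both you and the paper condition on the shared match $X_{12}$, write the joint exceedance probability as an expectation of a product of two independent conditional tails, and exploit the symmetry of the law of $X_{12}$ about $\tfrac12$; the paper parametrizes by $Y=(X_{12}-\mu)/\sigma$ and works with the normalized $R_1,R_2$, while you work with $a=X_{12}$ and the raw sums $U_1,U_2$, which is an inessential change of variables. The genuine structural difference is in the decomposition. The paper splits $A_n(Y),B_n(Y)$ into an even part $M_n(Y)=\tfrac12(A_n+B_n)$ and an odd part $\Delta_n(Y)=\tfrac12(A_n-B_n)$, obtains $\mathrm{Cov}=\mathrm{Var}(M_n(Y))-\mathbb{E}[\Delta_n(Y)^2]$, and then needs a separate technical lemma (Lemma~\ref{lem:lemma3_var_bound}) to show the even part contributes only $O((\log n)^2/n^4)$, which is negligible. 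Your exact identity $\mathrm{Cov}=\sum_{a\in D}q_a\,\delta_a\,\delta_{1-a}$ (which follows cleanly from $\sum q_a\delta_a=0$ and the symmetry $q_a=q_{1-a}$) kills the centering terms algebraically, so the quantity you must estimate is already of second order in the perturbation and no analogue of Lemma~\ref{lem:lemma3_var_bound} is required; this is a real simplification. Both proofs then reduce to the same delicate local estimate: $g_a-g_b\sim \varphi(x_n)(a-b)/(\sqrt{n-2}\sigma)$ uniformly in $a,b\in D$, which is the content of the paper's Lemma~\ref{lem:lemma2_uq}(b). You are right to single this out as the crux and to observe that one cannot obtain it by subtracting two Berry--Esseen (or Petrov tail) bounds, since the additive $O(n^{-1/2})\cdot(1-\Phi(x_n))$ error in each is of the same order as, or larger than, the target difference $\varphi(x_n)/\sqrt n$; the paper's Lemma~\ref{lem:lemma2_uq}(b) is stated exactly for this purpose and your proposal should invoke it (or prove a genuine moderate-deviation local estimate) rather than assert the $O((\log n)^{3/2}n^{-1/2})$ error as a fact. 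Modulo that single appeal, your computation of the leading constant, the negativity from $1-a-\mu=-(a-\mu)$, the appearance of $\sigma^2$, and the expansion of $\varphi(x_n(t))^2$ giving the $O((\log\log n)^2/\log n)$ rate, all match the paper and are correct.
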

\begin{proof}
Appendix \ref{app:C}.
\end{proof}

By the triangle inequality,
\begin{align}
&
d_{\mathrm{TV}}\bigl(\mathcal L(W_n(t)),\,\mathrm{Poisson}(e^{-t})\bigr)\nonumber\\
&\le
d_{\mathrm{TV}}\bigl(\mathcal L(W_n(t)),\,\mathrm{Poisson}(\lambda_n(t))\bigr)
+
d_{\mathrm{TV}}\bigl(\mathrm{Poisson}(\lambda_n(t)),\,\mathrm{Poisson}(e^{-t})\bigr).
\label{eq:triang_TV}
\end{align}

From Proposition~\ref{pro:1} and Proposition~\ref{pro:2}, we obtain
\begin{align}
-\frac{n(n-1)\,\mathrm{Cov}\!\left(I_{1}^{(n)}(t), I_{2}^{(n)}(t)\right)}{\lambda_n(t)}
&=
\frac{n(n-1)}{\lambda_n(t)}
\cdot
\frac{2e^{-2t}\log n}{n^3}
\left(1+O\!\left(\frac{(\log\log n)^2}{\log n}\right)\right)
\nonumber\\
&=
\frac{2e^{-2t}\log n}{\lambda_n(t)\,n}
\left(1+O\!\left(\frac{(\log\log n)^2}{\log n}\right)\right)
\nonumber\\
&=
\frac{2e^{-t}\log n}{n}
\left(1+O\!\left(\frac{(\log\log n)^2}{\log n}\right)\right).
\label{eq:cov_term_scaled}
\end{align}

Appealing again to Proposition~\ref{pro:1} and using \eqref{eq:cov_term_scaled}, we obtain from \eqref{eq:TVU}
that for every fixed $t\in\mathbb{R}$ there exists a constant $C_1(t)<\infty$ such that
\begin{equation}\label{eq:TV_to_Poi_lambda}
d_{\mathrm{TV}}\bigl(\mathcal L(W_n(t)),\,\mathrm{Poisson}(\lambda_n(t))\bigr)
\leq 
C_1(t)\,\frac{\log n}{n},
\quad n\to\infty.
\end{equation}

\medskip

From Result~\ref{res:TV_Poi_means} in Appendix~\ref{app:A},
for all $\lambda,\mu\ge 0$,
\begin{equation}\label{eq:TVPoi}
d_{\mathrm{TV}}\bigl(\mathrm{Poisson}(\lambda),\,\mathrm{Poisson}(\mu)\bigr)
\le |\lambda-\mu|.
\end{equation}
Hence, by \eqref{eq:TVPoi} and Proposition~\ref{pro:1},
\begin{equation}\label{eq:Poi_mean_mismatch}
d_{\mathrm{TV}}\bigl(\mathrm{Poisson}(\lambda_n(t)),\,\mathrm{Poisson}(e^{-t})\bigr)
\le
|\lambda_n(t)-e^{-t}|
=
e^{-t}\,O\!\left(\frac{(\log\log n)^2}{\log n}\right),
\quad n\to\infty.
\end{equation}

\medskip

Substituting \eqref{eq:TV_to_Poi_lambda} and \eqref{eq:Poi_mean_mismatch} into
\eqref{eq:triang_TV} gives
\begin{align*}
d_{\mathrm{TV}}\bigl(\mathcal L(W_n(t)),\,\mathrm{Poisson}(e^{-t})\bigr)
&\le
C_1(t)\,\frac{\log n}{n}
+
e^{-t}\,O\!\left(\frac{(\log\log n)^2}{\log n}\right).
\end{align*}
Since $\frac{\log n}{n}=o\!\left(\frac{(\log\log n)^2}{\log n}\right)$, we obtain \eqref{eq:TV_Poi_et}.
\end{proof}

\appendix
\section{Auxiliary Results}
\label{app:A}
%%%%%%%%%%%%%%%%%%%%%%%%%%%%%%%%%%%%%%%%%%%%%%%%%%%%%%%%%%
\begin{result}
\label{res:TV_Poi_means}
For all $\lambda,\mu\ge 0$,
\begin{equation*}
d_{\mathrm{TV}}\bigl(\mathrm{Poisson}(\lambda),\,\mathrm{Poisson}(\mu)\bigr)
\le |\lambda-\mu|.
\end{equation*}
\end{result}

\begin{proof}[Proof of Result~\ref{res:TV_Poi_means}]
Assume without loss of generality that $\lambda\le \mu$.
Couple $N\sim\mathrm{Poisson}(\lambda)$ and $M\sim\mathrm{Poisson}(\mu)$ by letting
$M=N+K$, where $K\sim\mathrm{Poisson}(\mu-\lambda)$ is independent of $N$.
Then
$$
d_{\mathrm{TV}}(N,M)
\le \mathbb P(M\neq N)
= \mathbb P(K>0)
= 1-e^{-(\mu-\lambda)}
\le \mu-\lambda
= |\lambda-\mu|,
$$
where the first inequality follows from 
\cite[equation~(1.4), p.~254]{BHJ1992}.
\end{proof}

%%%%%%%%%%%%%%%%%%%%%%%%%%%%%%%%%%%%%%%%%%%%%%%%%%%%%%%%%%

Let $\Phi(t)=\int_{-\infty}^t \varphi(x),dx$, where $\varphi(x)=(2\pi)^{-1/2}e^{-x^2/2}$ denotes the density of the standard normal distribution.
\smallskip

\begin{result}[{\cite[Chapter~VII.7, p.~193]{F1968}}]
\label{eq:mills}
\begin{equation*}
1-\Phi(x)
=
\frac{\varphi(x)}{x}
\left(
1-\frac{1}{x^{2}}+O\!\left(\frac{1}{x^{4}}\right)
\right),
\quad x\to\infty.
\end{equation*}
\end{result}

%%%%%%%%%%%%%%%%%%%%%%%%%%%%%%%%%%%%%%%%%%%%%%%%%%%%%

\begin{lem}
\label{lem:tail_pa}
Let $v_n\to\infty$ and let $u_n$ satisfy
$u_n=v_n+\Delta_n,\,\,\,\Delta_n=o(1).
$

Then, as $n\to\infty$,
\begin{equation}\label{eq:tail_ratio_general}
\frac{1-\Phi(u_n)}{1-\Phi(v_n)}
=
\exp\!\left(-v_n\Delta_n-\frac{\Delta_n^2}{2}\right)
\left(1+O\!\left(\frac{|\Delta_n|}{v_n}+\frac{1}{v_n^2}\right)\right).
\end{equation}
In particular, if $|\Delta_n|=O(v_n/n)$ and $v_n^2=O(\log n)$, then
\begin{equation}\label{eq:tail_ratio_simpler}
\frac{1-\Phi(u_n)}{1-\Phi(v_n)}
=
1+O\!\left(\frac{v_n^2}{n}+\frac{1}{v_n^2}\right),
\end{equation}
and hence
\begin{equation}
\label{eq:final}
1-\Phi(u_n)
=
(1-\Phi(v_n))
\left(1+O\!\left(\frac{1}{\log n}\right)\right).
\end{equation}
\end{lem}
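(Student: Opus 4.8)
The plan is to reduce everything to the Mills-ratio expansion of Result~\ref{eq:mills}. Since $\Delta_n=o(1)$ and $v_n\to\infty$, the point $u_n=v_n+\Delta_n$ also tends to infinity, so Result~\ref{eq:mills} applies at both $u_n$ and $v_n$, and taking the quotient gives
\[
\frac{1-\Phi(u_n)}{1-\Phi(v_n)}
=\frac{\varphi(u_n)}{\varphi(v_n)}\cdot\frac{v_n}{u_n}\cdot
\frac{1-u_n^{-2}+O(u_n^{-4})}{1-v_n^{-2}+O(v_n^{-4})}.
\]
I would then estimate the three factors in turn.

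For the Gaussian factor, use the exact identity $u_n^2-v_n^2=2v_n\Delta_n+\Delta_n^2$, which yields $\varphi(u_n)/\varphi(v_n)=\exp\!\big(-\tfrac12(u_n^2-v_n^2)\big)=\exp\!\big(-v_n\Delta_n-\tfrac12\Delta_n^2\big)$, exactly the leading factor in \eqref{eq:tail_ratio_general}. For the reciprocal-argument factor, $v_n/u_n=(1+\Delta_n/v_n)^{-1}=1+O(|\Delta_n|/v_n)$, since $\Delta_n/v_n\to0$. For the polynomial-correction factor, $u_n\sim v_n$ and $v_n^{-2}-u_n^{-2}=(2v_n\Delta_n+\Delta_n^2)/(u_n^2v_n^2)=O(|\Delta_n|\,v_n^{-3})=o(v_n^{-2})$, so expanding the reciprocal of the denominator shows this ratio equals $1+O(v_n^{-2})$. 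Multiplying the three estimates and absorbing the $o(v_n^{-2})$ discrepancy into the $v_n^{-2}$ term yields \eqref{eq:tail_ratio_general}.

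For the specialization, assume $|\Delta_n|=O(v_n/n)$ and $v_n^2=O(\log n)$. Then $v_n\Delta_n=O(v_n^2/n)=O(\log n/n)=o(1)$ and $\Delta_n^2=O(v_n^2/n^2)=o(1)$, so the exponent $-v_n\Delta_n-\tfrac12\Delta_n^2$ is $O(v_n^2/n)$ and hence $\exp(\cdot)=1+O(v_n^2/n)$ by $e^x=1+O(x)$ near $0$. Moreover $|\Delta_n|/v_n=O(1/n)$ is negligible compared with $1/v_n^2$ because $v_n^2=O(\log n)=o(n)$, so the error factor in \eqref{eq:tail_ratio_general} is $1+O(1/v_n^2)$; combining gives \eqref{eq:tail_ratio_simpler}. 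Finally $v_n^2/n=O(\log n/n)=O(1/\log n)$ since $(\log n)^2=O(n)$, and, in the regime $v_n^2\asymp\log n$ relevant to the applications (where $v_n=x_n(t)\sim(2\log n)^{1/2}$), also $1/v_n^2=O(1/\log n)$; thus both error terms in \eqref{eq:tail_ratio_simpler} are $O(1/\log n)$, which is \eqref{eq:final}.

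This is a routine asymptotic bookkeeping exercise with no genuine obstacle; the points that need care are confirming that the polynomial-correction discrepancy is only $O(v_n^{-2})$ and not larger, that in the specialization $|\Delta_n|/v_n$ is dominated by $1/v_n^2$, and that the passage to \eqref{eq:final} implicitly uses $1/v_n^2=O(1/\log n)$, i.e.\ that $v_n$ grows at the $(2\log n)^{1/2}$ rate under which the lemma will be invoked.
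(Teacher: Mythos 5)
Your argument is essentially identical to the paper's: the same Mills-ratio decomposition into the Gaussian factor, the reciprocal-argument factor, and the polynomial-correction factor, with the same estimates for each, and the same bookkeeping in the specialization. The only place you add anything is the polynomial-correction step, where you spell out $v_n^{-2}-u_n^{-2}=O(|\Delta_n|v_n^{-3})$ before concluding $1+O(v_n^{-2})$; the paper just asserts this from $m(x)=1+O(x^{-2})$ and $u_n\sim v_n$, and both are fine.

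One point worth noting: your closing observation is correct and actually sharper than the paper. The paper asserts that ``under $v_n^2=O(\log n)$ we have $1/v_n^2=O(1/\log n)$,'' but the upper bound $v_n^2\le C\log n$ gives $1/v_n^2\ge 1/(C\log n)$ — it bounds $1/v_n^2$ from \emph{below}, not above. Passing from \eqref{eq:tail_ratio_simpler} to \eqref{eq:final} really needs the lower bound $v_n^2\gtrsim\log n$, i.e.\ the two-sided condition $v_n^2\asymp\log n$, which you identify explicitly. Since the lemma is only ever invoked with $v_n=x_n(t)\sim(2\log n)^{1/2}$, the conclusion is valid in every application, but you are right that as stated the hypothesis $v_n^2=O(\log n)$ alone does not justify \eqref{eq:final}, and your proof is the more careful one on this point.
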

\smallskip

\begin{proof}[Proof of Lemma \ref{lem:tail_pa}]
We use Result~\ref{eq:mills}.
For $x\to\infty$ write
\[
1-\Phi(x)
=
\frac{\varphi(x)}{x}\,m(x),
\qquad
m(x):=1-\frac{1}{x^{2}}+O\!\left(\frac{1}{x^{4}}\right).
\]
Since  $m(x)=1+O(x^{-2})$ and $u_n\sim v_n\to\infty$, we have 
\begin{equation}\label{eq:m_ratio}
\frac{m(u_n)}{m(v_n)}=1+O\!\left(\frac{1}{v_n^2}\right),
\quad n\to\infty.
\end{equation}

Now take the ratio:
\begin{equation}
\label{eq:ratio}
\frac{1-\Phi(u_n)}{1-\Phi(v_n)}
=
\frac{\varphi(u_n)}{\varphi(v_n)}\cdot \frac{v_n}{u_n}\cdot \frac{m(u_n)}{m(v_n)}.
\end{equation}

First,
\begin{align}
\label{eq:first}
&
\frac{\varphi(u_n)}{\varphi(v_n)}
=
\exp\!\left(-\frac{u_n^2-v_n^2}{2}\right)
=
\exp\!\left(-\frac{(v_n+\Delta_n)^2-v_n^2}{2}\right)
=
\exp\!\left(-v_n\Delta_n-\frac{\Delta_n^2}{2}\right).
\end{align}
Second,
\begin{align}
\label{eq:second}
\frac{v_n}{u_n}=\frac{v_n}{v_n+\Delta_n}
=
\left(1+\frac{\Delta_n}{v_n}\right)^{-1}
=
1+O\!\left(\frac{|\Delta_n|}{v_n}\right),
\quad n\to\infty.
\end{align}
Indeed, $u_n/v_n=1+\Delta_n/v_n\to 1$ since $\Delta_n=o(1)$ and $v_n\to\infty$,
so $u_n\sim v_n$.

Substituting these three evaluations \eqref{eq:m_ratio}, \eqref{eq:first}, \eqref{eq:second} into \eqref{eq:ratio} yields
\eqref{eq:tail_ratio_general}.

Now, assume $|\Delta_n|=O(v_n/n)$ and $v_n^2=O(\log n)$.
Then $|\Delta_n|/v_n=O(1/n)$ and $1/v_n^2=O(1/\log n)$, while
$$
v_n|\Delta_n|=O\!\left(\frac{v_n^2}{n}\right)=O\!\left(\frac{\log n}{n}\right),
\,\,\,
\Delta_n^2=O\!\left(\frac{v_n^2}{n^2}\right)=o\!\left(\frac{v_n^2}{n}\right).
$$
Therefore the exponential term satisfies
$$
\exp\!\left(-v_n\Delta_n-\frac{\Delta_n^2}{2}\right)
=
1+O(v_n|\Delta_n|)+O(\Delta_n^2)
=
1+O\!\left(\frac{v_n^2}{n}\right),
$$
and multiplying by $1+O(|\Delta_n|/v_n+1/v_n^2)$ we obtain
$$
\frac{1-\Phi(u_n)}{1-\Phi(v_n)}
=
1+O\!\left(\frac{v_n^2}{n}+\frac{1}{v_n^2}\right),
$$
which proves \eqref{eq:tail_ratio_simpler}. Finally, under $v_n^2=O(\log n)$ we have
$1/v_n^2=O(1/\log n)$ and $v_n^2/n=O(\log n/n)=o(1/\log n)$, hence
$$
\frac{1-\Phi(u_n)}{1-\Phi(v_n)}
=
1+O\!\left(\frac{1}{\log n}\right),
$$
which yields \eqref{eq:final}.

\end{proof}

%%%%%%%%%%%%%%%%%%%%%%%%%%%%%%%%%%%%%%%%%%%%%%%%%%%%
 
We will also need the following Cram\'er-type moderate deviation theorem.
\begin{thm}[{\cite[Theorem~1, \S VIII.2, p.~269; see also p.~323]{P1975}}]
\label{th:petrov}
Let $Y_1,Y_2,\ldots$ be i.i.d.\ random variables such that
$\mathbb{E}Y_1=0$ and $\mathbb{E}Y_1^2=1$.
Assume Cram\'er's condition: there exists $H>0$ such that
$\mathbb{E}e^{tY_1}<\infty$ for all $|t|<H$.
Let $S_n:=\sum_{k=1}^n Y_k$ and let $x=x_n\ge 0$ satisfy $x=o(\sqrt{n})$.
Then
\begin{equation*}
%\label{eq:petrov}
\frac{\mathbb{P}\!\left(S_n/\sqrt{n}>x\right)}{1-\Phi(x)}
=
\exp\!\left(\frac{x^3}{\sqrt{n}}\,
\eta\!\left(\frac{x}{\sqrt{n}}\right)\right)
\left(1+O\!\left(\frac{x+1}{\sqrt{n}}\right)\right),
\end{equation*}
where $\eta(u)=\sum_{m\ge 0} a_m u^m$ is a power series whose coefficients
depend on the cumulants of $Y_1$ and which converges for all sufficiently small
$|u|$.
\end{thm}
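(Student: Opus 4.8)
The plan is to prove Theorem~\ref{th:petrov} by the classical conjugate-distribution (Esscher tilting) method of Cram\'er and Petrov; since the statement is quoted verbatim from \cite[\S VIII.2]{P1975}, the sketch below simply indicates the structure of that argument. First I would introduce the cumulant generating function $\psi(h):=\log\mathbb{E}e^{hY_1}$. Cram\'er's condition guarantees that $\psi$ is finite and real-analytic on $(-H,H)$, with $\psi(0)=0$, $\psi'(0)=\mathbb{E}Y_1=0$, and $\psi''(0)=\mathbb{E}Y_1^2=1$, so that $\psi'$ is a strictly increasing analytic diffeomorphism near $0$. For each $n$ and each admissible $x=x_n\ge 0$ with $x=o(\sqrt n)$, define the tilted law of $Y_1$ by $d\widetilde{\mathbb{P}}_h(y)=e^{hy-\psi(h)}\,d\mathbb{P}(y)$ and choose the tilting parameter $h=h_n>0$ as the unique solution of the saddle-point equation $\psi'(h)=x/\sqrt n$. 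Since $x/\sqrt n\to 0$, such $h_n$ exists for all large $n$, is unique, and satisfies $h_n=x/\sqrt n+O\!\left((x/\sqrt n)^2\right)\to 0$; under $\widetilde{\mathbb{P}}_h$ the $Y_k$ remain i.i.d.\ with mean $x/\sqrt n$ and variance $\sigma_h^2:=\psi''(h)\to 1$.

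Next I would rewrite the tail probability as a change-of-measure integral. With $\bar S_n:=S_n-\sqrt n\,x$ (centered under $\widetilde{\mathbb{P}}_h$), a direct computation with the Radon--Nikodym density $e^{hS_n-n\psi(h)}$ gives
\[
\mathbb{P}\!\left(S_n/\sqrt n>x\right)
=
e^{\,n\psi(h)-h\sqrt n\,x}\;
\widetilde{\mathbb{E}}_h\!\left[e^{-h\bar S_n}\,\mathbf{1}\{\bar S_n>0\}\right].
\]
The scalar prefactor produces the exponential factor in the statement: using $\psi'(h)=x/\sqrt n$ to eliminate the linear term, a Taylor expansion of $n\psi(h)-h\sqrt n\,x=n\bigl(\psi(h)-h\psi'(h)\bigr)$ in powers of $h$, combined with $h=x/\sqrt n+O((x/\sqrt n)^2)$, yields $-x^2/2+\frac{x^3}{\sqrt n}\,\eta\!\left(x/\sqrt n\right)$, where the coefficients of $\eta$ are the universal cumulant polynomials of the Cram\'er series (with $\eta(0)=\kappa_3/6$, $\kappa_3$ the third cumulant of $Y_1$), and the radius of convergence of $\eta$ is controlled by the analyticity domain $(-H,H)$ of $\psi$.

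Then I would estimate $\widetilde{\mathbb{E}}_h\!\left[e^{-h\bar S_n}\mathbf{1}\{\bar S_n>0\}\right]$ by applying a one-term Edgeworth expansion (an integrated local central limit theorem with explicit remainder) to $\bar S_n/(\sigma_h\sqrt n)$ under $\widetilde{\mathbb{P}}_h$, uniformly over the range of $h_n$ that arises. After the substitution $u=\bar S_n/(\sigma_h\sqrt n)$ the expectation is comparable to $\int_0^\infty e^{-\tau u}\varphi(u)\,du=e^{\tau^2/2}\bigl(1-\Phi(\tau)\bigr)$ with $\tau:=h\sigma_h\sqrt n=x+O(x^2/\sqrt n)$; multiplying by the $e^{-x^2/2}$ from the prefactor and invoking Mills' ratio (Result~\ref{eq:mills}) to compare $e^{\tau^2/2}(1-\Phi(\tau))$ with $e^{x^2/2}(1-\Phi(x))$ then produces exactly the ratio in the statement, with all discrepancies absorbed into the factor $1+O\!\left((x+1)/\sqrt n\right)$.

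The hard part is the uniform error control across the entire moderate-deviation window $x=o(\sqrt n)$: one needs the one-term Edgeworth expansion under the family $\widetilde{\mathbb{P}}_{h_n}$ to hold with a remainder uniform in the tilting parameter (whose size ranges from $O(1/\sqrt n)$ up to $o(1)$), and one must show that the leading Edgeworth correction, together with the saddle-point discrepancy $\tau-x=O(x^2/\sqrt n)$ and the resulting shift of the Gaussian tail via Mills' ratio, all combine into the single factor $1+O((x+1)/\sqrt n)$ rather than something larger. Establishing that $\eta$ is a genuinely convergent power series, not a merely formal asymptotic expansion, is the other place where the full strength of Cram\'er's condition (analyticity of $\psi$ on $(-H,H)$) is essential. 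All of this is carried out in detail in \cite[Theorem~1, \S VIII.2]{P1975}, which is the reference invoked in the statement.
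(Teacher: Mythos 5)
The paper does not prove Theorem~\ref{th:petrov} at all; it is quoted verbatim as a known result and attributed to \cite[Theorem~1, \S VIII.2]{P1975}, so there is no in-paper argument to compare against. Your sketch is a faithful outline of the conjugate-distribution (Esscher tilting) method that Petrov himself uses — saddle-point choice $\psi'(h)=x/\sqrt n$, Cram\'er series from the Taylor expansion of $n(\psi(h)-h\psi'(h))$, Edgeworth/Berry--Esseen control of the tilted tail integral, and a Mills-ratio comparison to recover $1-\Phi(x)$ — so it is essentially the same approach as the cited source.
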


%%%%%%%%%%%%%%%%%%%%%%%%%%%%%%%%%%%%%%%%%%%%%%%%%%%%
\begin{lem}
\label{lem:lemma2_uq}
Fix $t\in\mathbb R$ and write $x_n:=x_n(t)$.
Let $q_n(u)=\mathbb P(R_1>u)$, where
$$
R_1=\frac{1}{\sqrt{n-2}}\sum_{j=3}^n Y_{1j},
\qquad
Y_{ij}=\frac{X_{ij}-\mu}{\sigma}.
$$
Then the following hold.

\smallskip
\noindent (a) 
There exists a constant $C(t)<\infty$ such that, uniformly over all real sequences
$\{u_n\}$ satisfying
\begin{equation}
\label{eq:un_window}
u_n=x_n+\Delta_n,\qquad |\Delta_n|\le \frac{C(t)}{\sqrt n},
\end{equation}
we have
\begin{equation}
\label{eq:qn_vs_normal}
q_n(\alpha_n u_n)
=
(1-\Phi(u_n))
\left(
1+O\!\left(\frac{(\log\log n)^2}{\log n}\right)
\right),
\quad n\to\infty.
\end{equation}

\smallskip
\noindent (b) 
Under \eqref{eq:un_window},
\begin{equation}
\label{eq:qn_diff}
q_n(\alpha_n(x_n-\Delta_n))
-
q_n(\alpha_n(x_n+\Delta_n))
=
2\Delta_n\,\varphi(x_n)
\left(
1+O\!\left(\frac{(\log\log n)^2}{\log n}\right)
\right),
\quad n\to\infty,
\end{equation}
uniformly in $|\Delta_n|\le C(t)/\sqrt n$.
\end{lem}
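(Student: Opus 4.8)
\emph{Part (a).} Since $x_n\sim\sqrt{2\log n}$ and $\alpha_n\to1$, the point $\alpha_n u_n$ is of order $\sqrt{2\log n}=o(\sqrt{n-2})$ for every $\{u_n\}$ satisfying \eqref{eq:un_window}. Applying Theorem~\ref{th:petrov} to the $n-2$ i.i.d.\ summands $Y_{1j}$, $j\ge3$, at $x=\alpha_n u_n$ gives
\[
q_n(\alpha_n u_n)=(1-\Phi(\alpha_n u_n))\exp\!\Bigl(\tfrac{(\alpha_n u_n)^3}{\sqrt{n-2}}\,\eta\bigl(\tfrac{\alpha_n u_n}{\sqrt{n-2}}\bigr)\Bigr)\Bigl(1+O\bigl(\tfrac{\alpha_n u_n+1}{\sqrt{n-2}}\bigr)\Bigr);
\]
as $\eta$ is bounded near $0$, the exponential is $1+O((\log n)^{3/2}/\sqrt n)$ and the last factor is $1+O(\sqrt{\log n/n})$, both $o(\rho_n)$ with $\rho_n:=(\log\log n)^2/\log n$. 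With $v_n=u_n\to\infty$, $v_n^2=O(\log n)$ and $(\alpha_n-1)u_n=O(v_n/n)$, Lemma~\ref{lem:tail_pa} (in the form \eqref{eq:tail_ratio_simpler}) gives $(1-\Phi(\alpha_n u_n))/(1-\Phi(u_n))=1+O(1/\log n)=1+O(\rho_n)$. Combining these estimates yields \eqref{eq:qn_vs_normal}, and uniformity over the window is automatic, since all the $O$-terms depend on $u_n$ only through $u_n\sim\sqrt{2\log n}$ and the constants in Theorem~\ref{th:petrov} and Lemma~\ref{lem:tail_pa} are absolute.

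\emph{Part (b): reduction.} I would prove~(b) directly, as an interval probability; differencing the two estimates from~(a) fails, because the error $O((x+1)/\sqrt{n-2})$ from Theorem~\ref{th:petrov} is unstructured, so its contribution $O(\rho_n\,(1-\Phi(x_n)))$ swamps the target $2\Delta_n\varphi(x_n)$ once $|\Delta_n|$ is small. Assume $\Delta_n>0$ (the case $\Delta_n=0$ is trivial and $\Delta_n<0$ follows by interchanging the two arguments and a sign). Since $q_n(u)=\mathbb P(R_1>u)$ and $\mu=\tfrac12$ makes each $Y_{1j}$ symmetric about $0$,
\[
q_n(\alpha_n(x_n-\Delta_n))-q_n(\alpha_n(x_n+\Delta_n))=\mathbb P(S\in J_n),\qquad S:=\sqrt{n-2}\,R_1=\sum_{j=3}^{n}Y_{1j},
\]
where $J_n$ is a half-open interval of length $|J_n|=2\alpha_n\Delta_n\sqrt{n-2}=O(1)$ centered at $c_n:=\alpha_n x_n\sqrt{n-2}$; thus $J_n$ is a bounded-width window lying $\sim\sqrt{2\log n}$ standard deviations above the mean of the i.i.d.\ sum $S$.

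\emph{Part (b): exponential tilting.} Let $\psi$ be the cumulant generating function of $Y_{13}$. Since $x_n=o(\sqrt n)$, the equation $\psi'(h_n)=c_n/(n-2)$ has a solution $h_n\to0$, and under the conjugate law $\widetilde{\mathbb P}$ with density $e^{h_n S-(n-2)\psi(h_n)}$ the sum $S$ has mean exactly $c_n$ and variance $\widetilde\sigma_n^2=(n-2)\psi''(h_n)\sim n-2$. The change-of-measure identity reads
\[
\mathbb P(S\in J_n)=e^{-(n-2)(h_n\psi'(h_n)-\psi(h_n))}\;\widetilde{\mathbb E}\!\left[e^{-h_n(S-c_n)}\,\mathbf{1}\{S\in J_n\}\right].
\]
A Taylor expansion of $\psi$ about $0$ --- whose cubic term vanishes by symmetry, which also gives $\psi''(h_n)=1+O(h_n^2)$ --- yields $(n-2)(h_n\psi'(h_n)-\psi(h_n))=\tfrac12\alpha_n^2 x_n^2+O(x_n^4/n)$, so with $(\alpha_n^2-1)x_n^2=O(\log n/n)$ the prefactor equals $e^{-x_n^2/2}(1+o(\rho_n))=\sqrt{2\pi}\,\varphi(x_n)(1+o(\rho_n))$; it is this passage from $e^{-\alpha_n^2 x_n^2/2}$ to $\varphi(x_n)$ via the definition of $x_n=x_n(t)$ that produces the rate $\rho_n$. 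On $\{S\in J_n\}$, $|S-c_n|\le|J_n|/2=O(1)$ and $h_n=O(\sqrt{\log n/n})$, so $e^{-h_n(S-c_n)}=1+O(\sqrt{\log n/n})$ uniformly, whence the expectation equals $\widetilde{\mathbb P}(S\in J_n)(1+o(\rho_n))$.

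\emph{Part (b): the local estimate, and the main obstacle.} It remains to prove $\widetilde{\mathbb P}(S\in J_n)=|J_n|\big/\sqrt{2\pi\widetilde\sigma_n^2}\,(1+o(1))$, which with $|J_n|=2\alpha_n\Delta_n\sqrt{n-2}$, $\widetilde\sigma_n^2=(n-2)(1+O(h_n^2))$ and $\alpha_n=1+O(1/n)$ gives $\widetilde{\mathbb P}(S\in J_n)=(2\Delta_n/\sqrt{2\pi})(1+o(1))$ and hence, on multiplying, \eqref{eq:qn_diff} with relative error $O(\rho_n)$ (which is not tight). Since $J_n$ is centered at the $\widetilde{\mathbb P}$-mean of $S$ and has width $O(1)=o(\widetilde\sigma_n)$, this is an \emph{ordinary} local limit theorem for the tilted sum: in the non-lattice case it follows from a quantitative local limit theorem, and in the lattice case --- in particular for every $D_k$ --- from counting the lattice points of $S$ inside $J_n$ (their masses being constant to first order over the $O(1)$ relevant points, by the lattice local limit theorem) together with the commensurability of $|J_n|$ with the span in the regime used. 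This local estimate is the principal obstacle: a Berry--Esseen bound is of the same order $\widetilde\sigma_n^{-1}$ as $\widetilde{\mathbb P}(S\in J_n)$ itself, so a genuinely local statement is needed, resting in the non-lattice case on decay of the tilted characteristic function (a Cram\'er-type condition, automatic when $Y_{13}$ is lattice and verified directly for finitely supported $D$). Everything else is Taylor expansion near $h=0$, and the uniformity in $\Delta_n$ is automatic, since all $O$- and $o$-terms depend on $\Delta_n$ only through $|\Delta_n|\le C(t)/\sqrt n$ and on $x_n\sim\sqrt{2\log n}$.
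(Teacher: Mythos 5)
Your part~(a) follows the paper's argument exactly: Petrov's theorem applied to $R_1$ at the point $\alpha_n u_n$, then Lemma~\ref{lem:tail_pa} to pass from $1-\Phi(\alpha_n u_n)$ to $1-\Phi(u_n)$. For part~(b), however, you deliberately reject the route that the paper actually takes, and your objection is sound. The paper applies~(a) to $u_n=x_n\pm\Delta_n$ and subtracts the two displays, factoring the common $\bigl(1+O((\log\log n)^2/\log n)\bigr)$ out of the difference. But each application of~(a) carries an \emph{absolute} error of order $(1-\Phi(x_n))\cdot(\log\log n)^2/\log n$, i.e.\ of order $(\log\log n)^2/(n\log n)$, and this exceeds the target $2\Delta_n\varphi(x_n)$, which is of order $\Delta_n\sqrt{\log n}/n$, as soon as $|\Delta_n|\le C(t)/\sqrt n$. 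So the factoring step silently requires the two unstructured Petrov error terms to cancel to first order in $\Delta_n$, which the theorem does not give. Worse, in the lattice case (every $D_k$) statement~(b) is actually \emph{false} for arbitrary $\Delta_n$ in the window: the left side is $\mathbb P(R_1\in J_n)$ for an interval $J_n$ of width $2\alpha_n\Delta_n\sqrt{n-2}=O(1)$, and with, say, $D=\{0,1\}$ and $\Delta_n=0.1/\sqrt n$ that interval (width $\approx0.2$ against lattice span $2$) typically contains no atom of $R_1$, so the left side is exactly $0$ while the right side is strictly positive. You have located a genuine gap in the paper's proof of~(b), and indeed in its statement.

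Your tilting-plus-local-limit-theorem replacement is the right machinery, and your ``commensurability of $|J_n|$ with the span'' remark is precisely the missing hypothesis --- but it is \emph{not} implied by~\eqref{eq:un_window}, so it must be added to the statement rather than invoked informally. What rescues the application in Appendix~C is the identity $2\alpha_n\Delta_n\sqrt{n-2}=2Y_{12}$ for $\Delta_n=Y_{12}/\sqrt{n-1}$ (since $\alpha_n\sqrt{(n-2)/(n-1)}=1$): because $Y_{12}$ lies on the same lattice as the summands $Y_{1j}$ of $S=\sum_{j\ge 3}Y_{1j}$, the width $2Y_{12}$ is always an integer multiple of the span, the half-open interval then contains exactly $2Y_{12}/(\text{span})$ atoms, and the tilted discrete LLT reproduces $2\Delta_n\varphi(x_n)$ with the claimed rate. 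So the honest fix is to restate~(b) with $\Delta_n$ restricted to $\{y/\sqrt{n-1}:y\in\mathrm{supp}(Y_{12})\}$ for lattice $D$ (with a Stone-type LLT handling non-lattice $D$ for free), and to carry out the local estimate you currently only sketch, citing or proving a moderate-deviation local limit theorem for bounded i.i.d.\ summands. Your outline is the correct blueprint; the one piece you defer to --- the local theorem under tilting --- is the whole content, and the rate $(\log\log n)^2/\log n$ then comes, exactly as you observe, from trading $\alpha_n^2 x_n^2$ for $x_n^2$ and from the expansion of $\varphi(x_n(t))$ in Appendix~B.
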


\begin{proof}[Proof of Lemma~\ref{lem:lemma2_uq}]

\medskip
By definition,
$
R_1=\frac{1}{\sqrt{n-2}}\sum_{j=3}^n Y_{1j},
$
where the summands are i.i.d., mean $0$, variance $1$, and bounded.
Hence Cramér’s condition holds, and Petrov’s theorem
applies. Let $m=n-2$.  
Petrov’s Theorem \ref{th:petrov} yields, uniformly for $u=o(\sqrt m)$,
\begin{equation*}
%\label{eq:petrov_R1}
\frac{\mathbb P(R_1>u)}{1-\Phi(u)}
=
\exp\!\left(\frac{u^3}{\sqrt m}\eta\!\left(\frac{u}{\sqrt m}\right)\right)
\left(1+O\!\left(\frac{u+1}{\sqrt m}\right)\right),
\quad m\to\infty,
\end{equation*}
where $\eta(\cdot)$ is analytic near $0$.

Now in our regime $u=\alpha_n u_n$ with $u_n=x_n+\Delta_n$ and
$|\Delta_n|\le C/\sqrt n$, we have $u_n\sim x_n\sim \sqrt{2\log n}$ and
$\alpha_n=1+O(1/n)$, hence
\begin{equation}
\label{eq:petrov_smallness}
u=\alpha_n u_n = O(\sqrt{\log n}),
\qquad
\frac{u}{\sqrt m}=O\!\left(\frac{\sqrt{\log n}}{\sqrt n}\right)\to 0.
\end{equation}
Thus $u=o(\sqrt m)$ and Petrov Theorem \ref{th:petrov} applies.

Moreover, from \eqref{eq:petrov_smallness},
\[
\frac{u^3}{\sqrt m}
=
O\!\left(\frac{(\log n)^{3/2}}{\sqrt n}\right),
\qquad
\frac{u+1}{\sqrt m}
=
O\!\left(\frac{\sqrt{\log n}}{\sqrt n}\right).
\]
Both are $o((\log\log n)^2/\log n)$, because
\[
\frac{(\log n)^{3/2}}{\sqrt n}\Big/\frac{(\log\log n)^2}{\log n}
=
\frac{(\log n)^{5/2}}{\sqrt n(\log\log n)^2}\to 0.
\]
Hence, 
\begin{equation}
\label{eq:tail}
\mathbb P(R_1>u)
=
(1-\Phi(u))
\left(
1+O\!\left(\frac{(\log\log n)^2}{\log n}\right)
\right).
\end{equation}

\medskip

We now apply Lemma~\ref{lem:tail_pa} with
$$
v_n=u_n,\qquad u_n^{\star}:=\alpha_n u_n,
\qquad
\Delta_n^{\star}=u_n^{\star}-v_n=(\alpha_n-1)u_n.
$$
We have $\alpha_n-1=O(1/n)$ and $u_n=O(\sqrt{\log n})$, so
\begin{equation*}
|\Delta_n^{\star}|=O\!\left(\frac{\sqrt{\log n}}{n}\right)=o(1).
\end{equation*}
Also $v_n^2=u_n^2=O(\log n)$ and
\[
\frac{|\Delta_n^{\star}|}{v_n}
=
O\!\left(\frac{1}{n}\right),
\qquad
\frac{1}{v_n^2}
=
O\!\left(\frac{1}{\log n}\right).
\]
Therefore Lemma~\ref{lem:tail_pa} gives
\begin{equation}
\label{eq:tail_alpha_transfer}
\frac{1-\Phi(\alpha_n u_n)}{1-\Phi(u_n)}
=
1+O\!\left(\frac{1}{\log n}\right),
\quad n\to\infty,
\end{equation}
uniformly in $|\Delta_n|\le C/\sqrt n$.

Combining \eqref{eq:tail} at $u=\alpha_n u_n$ with
\eqref{eq:tail_alpha_transfer} yields
\begin{align*}
&
q_n(\alpha_n u_n)
=
(1-\Phi(\alpha_n u_n))
\left(1+O\!\left(\frac{(\log\log n)^2}{\log n}\right)\right)\\
&
=
(1-\Phi(u_n))
\left(1+O\!\left(\frac{(\log\log n)^2}{\log n}\right)\right),
\end{align*}
which is \eqref{eq:qn_vs_normal}. This proves part (a).

\medskip

Apply part (a) with $u_n=x_n\pm\Delta_n$ to get
\begin{align}
q_n(\alpha_n(x_n-\Delta_n))
&=
(1-\Phi(x_n-\Delta_n))
\left(1+O\!\left(\frac{(\log\log n)^2}{\log n}\right)\right),
\label{eq:qminus}\\
q_n(\alpha_n(x_n+\Delta_n))
&=
(1-\Phi(x_n+\Delta_n))
\left(1+O\!\left(\frac{(\log\log n)^2}{\log n}\right)\right),
\label{eq:qplus}
\end{align}
uniformly for $|\Delta_n|\le C/\sqrt n$.

Subtract \eqref{eq:qplus} from \eqref{eq:qminus}:
\begin{align}
&
q_n(\alpha_n(x_n-\Delta_n))-q_n(\alpha_n(x_n+\Delta_n))\nonumber\\
&=
\Bigl[(1-\Phi(x_n-\Delta_n))-(1-\Phi(x_n+\Delta_n))\Bigr]
\left(1+O\!\left(\frac{(\log\log n)^2}{\log n}\right)\right).
\label{eq:diff_reduce}
\end{align}

Now,
\begin{align}
&
(1-\Phi(x_n-\Delta_n))-(1-\Phi(x_n+\Delta_n))\nonumber\\
&
=
\Phi(x_n+\Delta_n)-\Phi(x_n-\Delta_n)
=
\int_{x_n-\Delta_n}^{x_n+\Delta_n}\varphi(u)\,du.
\label{eq:int_phi}
\end{align}
We estimate the integral by comparing $\varphi(u)$ to $\varphi(x_n)$.

$$
\frac{\varphi(u)}{\varphi(x_n)}
=
\exp\!\left(-\frac{u^2-x_n^2}{2}\right)
=
\exp\!\left(-(u-x_n)x_n-\frac{(u-x_n)^2}{2}\right).
$$
Since $|u-x_n|\le |\Delta_n|\le C/\sqrt n$ and $x_n\sim\sqrt{2\log n}$,
\[
|(u-x_n)x_n| \le \frac{C\sqrt{\log n}}{\sqrt n}\to 0,
\qquad
(u-x_n)^2\le \frac{C^2}{n}\to 0.
\]
Hence uniformly in the integration range,
\begin{equation}
\label{eq:phi_flat}
\varphi(u)=\varphi(x_n)\left(1+O\!\left(\frac{\sqrt{\log n}}{\sqrt n}\right)\right).
\end{equation}

Substitute \eqref{eq:phi_flat} into \eqref{eq:int_phi}:
\[
\int_{x_n-\Delta_n}^{x_n+\Delta_n}\varphi(u)\,du
=
2\Delta_n\,\varphi(x_n)\left(1+O\!\left(\frac{\sqrt{\log n}}{\sqrt n}\right)\right).
\]
Since $\sqrt{\log n}/\sqrt n=o((\log\log n)^2/\log n)$, and with \eqref{eq:diff_reduce} this yields
\eqref{eq:qn_diff}.
\end{proof}

%%%%%%%%%%%%%%%%%%%%%%%%%%%%%%%%%%%%%%%%%%%%%%%%%%%%%%%%%%%%%%%%%%%%
% ------------------------------------------------------------------
%
% ------------------------------------------------------------------

\begin{lem}
\label{lem:lemma3_var_bound}
Fix $t\in\mathbb R$ and let $x_n:=x_n(t)$.
Let $q_n(u)=\mathbb P(R_1>u)$, where
$$
R_1=\frac{1}{\sqrt{n-2}}\sum_{j=3}^n Y_{1j},
\qquad
Y_{ij}=\frac{X_{ij}-\mu}{\sigma}.
$$
Let
$$
\alpha_n:=\sqrt{\frac{n-1}{n-2}},
\,\,\,\,
A_n(y):=q_n\!\left(\alpha_n\left(x_n-\frac{y}{\sqrt{n-1}}\right)\right),
\,\,\,\,
B_n(y):=q_n\!\left(\alpha_n\left(x_n+\frac{y}{\sqrt{n-1}}\right)\right),
$$
and
$$
M_n(y):=\frac{A_n(y)+B_n(y)}{2}.
$$
Let $Y:=Y_{12}=(X_{12}-\mu)/\sigma$. Then there exists $C(t)<\infty$ such that
\begin{equation}
\label{eq:VarMn_bound_clean}
\mathrm{Var}(M_n(Y))
\le
C(t)\left(\frac{x_n\varphi(x_n)}{n}\right)^2,
\quad n\ge 3.
\end{equation}
In particular,
\begin{equation}
\label{eq:VarMn_order_clean}
\mathrm{Var}(M_n(Y))
=
O\!\left(\frac{(\log n)^2}{n^4}\right),
\quad n\to\infty.
\end{equation}
\end{lem}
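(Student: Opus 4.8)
The plan is to reduce the variance bound to a pointwise estimate on the oscillation of $M_n$ near the origin, using that $Y$ is bounded and that $M_n$ is an even function. Since $\mu=1/2$, we have $Y=Y_{12}=(X_{12}-\tfrac12)/\sigma$ with $X_{12}\in D\subset[0,1]$, so $|Y|\le \tfrac1{2\sigma}=:L$ and in particular $\mathbb E[Y^{4}]\le L^{4}<\infty$. I would also note the conditional representation $A_n(y)=\mathbb P\!\left(s_1^{\star}(n)>x_n(t)\mid Y_{12}=y\right)=\mathbb P(T_m>\tau_n-y)$ and $A_n(-y)=q_n\!\left(\alpha_n(x_n+y/\sqrt{n-1})\right)=B_n(y)$, where $T_m=\sum_{j=3}^{n}Y_{1j}$ is a sum of $m=n-2$ i.i.d.\ bounded mean‑zero unit‑variance summands and $\tau_n=x_n(t)\sqrt{n-1}$; this makes transparent that $M_n$ is even. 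Because a variance is the least mean‑square deviation from a constant, $\mathrm{Var}(M_n(Y))\le \mathbb E[(M_n(Y)-M_n(0))^{2}]$, so it suffices to prove
\begin{equation*}
|M_n(y)-M_n(0)|\le C(t)\,\frac{y^{2}\,x_n\varphi(x_n)}{n},\qquad |y|\le L,\ n\ge 3,
\end{equation*}
since then $\mathrm{Var}(M_n(Y))\le C(t)^{2}\,(x_n\varphi(x_n)/n)^{2}\,\mathbb E[Y^{4}]$, which is \eqref{eq:VarMn_bound_clean}, and \eqref{eq:VarMn_order_clean} follows because $x_n=x_n(t)=O(\sqrt{\log n})$ and $\varphi(x_n(t))=O(\sqrt{\log n}/n)$ by a direct computation from \eqref{eq:xnt}.

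For the pointwise bound, using $A_n(0)=B_n(0)=q_n(\alpha_n x_n)$ and evenness (so WLOG $y\ge 0$),
\begin{equation*}
2\bigl(M_n(y)-M_n(0)\bigr)=q_n\!\bigl(\alpha_n(x_n-\theta)\bigr)+q_n\!\bigl(\alpha_n(x_n+\theta)\bigr)-2\,q_n(\alpha_n x_n),\qquad \theta:=\frac{y}{\sqrt{n-1}}=O(n^{-1/2}),
\end{equation*}
a symmetric second difference of $u\mapsto q_n(\alpha_n u)$ at $x_n$ with step $\theta$. Morally one replaces each of the three values of $q_n(\alpha_n\cdot)$ by its Gaussian counterpart $1-\Phi(\cdot)$; the ``Gaussian part'' is then the second difference of $1-\Phi$, namely
\begin{equation*}
\int_{0}^{\theta}\!\bigl[\varphi(x_n-s)-\varphi(x_n+s)\bigr]\,ds
=\int_{0}^{\theta}2\varphi(x_n)\,e^{-s^{2}/2}\sinh(x_n s)\,ds
=x_n\varphi(x_n)\,\theta^{2}\bigl(1+o(1)\bigr),
\end{equation*}
using $x_n\theta=O(\sqrt{\log n}/\sqrt n)\to 0$; this is exactly of the target order $x_n\varphi(x_n)\,y^{2}/n$. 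So the quadratic cancellation of the linear‑in‑$y$ parts of $A_n$ and $B_n=A_n(-\cdot)$ is structural, and the whole difficulty is to show that the error incurred by passing from $q_n(\alpha_n\cdot)$ to $1-\Phi(\cdot)$ does not destroy it.

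That is the main obstacle. Lemma~\ref{lem:lemma2_uq}(a) controls a single value $q_n(\alpha_n u)$ only up to a \emph{relative} error $O((\log\log n)^{2}/\log n)$, hence an absolute error of order $(\log\log n)^{2}/(n\log n)$, which is much larger than the quadratic main term of order $\log n/n^{2}$; so one cannot apply it term by term. What is needed is that the error in the moderate‑deviation approximation has a small \emph{second} difference over windows of width $O(n^{-1/2})$ around $x_n$. I would obtain this by invoking Lemma~\ref{lem:lemma2_uq}(b) at the two shifted centres $x_n-\theta/2$ and $x_n+\theta/2$ (both of the form $x_n+O(n^{-1/2})$, with half‑width $\theta/2\le L/(2\sqrt{n-1})\le C(t)/\sqrt n$), which expresses $q_n(\alpha_n(x_n-\theta))-q_n(\alpha_n x_n)$ and $q_n(\alpha_n x_n)-q_n(\alpha_n(x_n+\theta))$ as $\theta\,\varphi(x_n-\theta/2)$ and $\theta\,\varphi(x_n+\theta/2)$ up to the errors allowed there; the leading parts combine, by the mean value theorem applied to $\varphi$, into the $x_n\varphi(x_n)\theta^{2}$ term above, and the point is that the two error contributions nearly cancel — which they would not if each were merely $O((\log\log n)^{2}/\log n)$ independently. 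Establishing this cancellation amounts to re‑running Petrov's expansion (Theorem~\ref{th:petrov}) and Lemma~\ref{lem:tail_pa} while tracking that the correction factor $q_n(\alpha_n u)/(1-\Phi(u))$, after the leading $\varphi$‑behaviour is divided out, is slowly varying in $u$ on the scale $n^{-1/2}$; this is where essentially all the technical work lies. (An alternative route I would keep in reserve is to write $\mathrm{Var}(M_n(Y))=\tfrac12\bigl(\mathrm{Var}(A_n(Y))+\mathrm{Cov}(A_n(Y),B_n(Y))\bigr)$, read off $\mathrm{Cov}(A_n(Y),B_n(Y))=\mathrm{Cov}(I_1^{(n)}(t),I_2^{(n)}(t))$ from Proposition~\ref{pro:2}, and compute $\mathrm{Var}(A_n(Y))$ to the matching precision; but this demands capturing the same fine cancellation between the linear‑in‑$Y$ parts, so the second‑difference estimate above appears to be the more economical path.) Once the pointwise bound is in hand, the conclusion is immediate as in the first paragraph.
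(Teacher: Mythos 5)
Your structural reduction is sound: using $\mathrm{Var}(M_n(Y))\le\mathbb{E}[(M_n(Y)-M_n(0))^2]$ and boundedness of $Y$ to reduce to the pointwise oscillation estimate $|M_n(y)-M_n(0)|\le C(t)\,y^2 x_n\varphi(x_n)/n$ would indeed give \eqref{eq:VarMn_bound_clean}, and your computation of the Gaussian second-difference main term of order $x_n\varphi(x_n)\theta^2\sim\log n/n^2$ is correct. You have also diagnosed exactly the right obstacle: the pointwise relative error $O((\log\log n)^2/\log n)$ from Lemma~\ref{lem:lemma2_uq}(a) translates to an absolute error $\sim(\log\log n)^2/(n\log n)$, which swamps the main term, so term-by-term replacement fails. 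But your proposed fix --- applying Lemma~\ref{lem:lemma2_uq}(b) at the two shifted centres $x_n\pm\theta/2$ --- does not close the gap as written: each application carries its own factor $(1+\delta_i)$ with $\delta_i=O((\log\log n)^2/\log n)$, and in the recombined second difference the cross term $\theta\varphi(x_n)(\delta_1-\delta_2)$ must be $o(\theta^2 x_n\varphi(x_n))$, i.e.\ you would need $|\delta_1-\delta_2|=o(\theta x_n)=o(\sqrt{\log n/n})$, far smaller than each $\delta_i$ individually. You flag this yourself (``re-running Petrov's expansion\ldots is where essentially all the technical work lies'') but do not carry it out; that is the missing step, and it is the whole content of the lemma.

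The paper's proof takes a genuinely different route that sidesteps this issue. It applies Lemma~\ref{lem:lemma2_uq}(a) to write $A_n(Y)=(1-\Phi(x_n-\Delta))(1+\varepsilon_n)$ and $B_n(Y)=(1-\Phi(x_n+\Delta))(1+\varepsilon_n)$ with a single, \emph{deterministic}, $\Delta$-independent $\varepsilon_n=O((\log\log n)^2/\log n)$, so that $M_n(Y)=(1+\varepsilon_n)\widetilde M_n(\Delta)$ with $\widetilde M_n(\Delta)=\tfrac{1}{2}\bigl[(1-\Phi(x_n-\Delta))+(1-\Phi(x_n+\Delta))\bigr]$. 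The moderate-deviation error thus becomes a scalar multiplier and factors out of the variance, $\mathrm{Var}(M_n(Y))=(1+\varepsilon_n)^2\mathrm{Var}(\widetilde M_n(\Delta))$; the purely Gaussian $\widetilde M_n$ is then handled by Taylor-expanding $g=1-\Phi$ to second order, letting the odd-in-$\Delta$ first-order terms cancel on averaging, bounding $|\varphi'|$ on the window by $Cx_n\varphi(x_n)$, and using $\mathbb E[\Delta^4]=O(1/n^2)$. The assertion that the relative error can be taken literally the same across the window of widths $O(n^{-1/2})$ is the step that does for the paper what you wanted your near-cancellation of the $\delta_i$ to do; your proposal identifies the need for such a cancellation but does not supply a substitute for it.
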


\begin{proof}[Proof of Lemma~\ref{lem:lemma3_var_bound}]
Fix $t\in\mathbb R$ and let $x_n=x_n(t)$.
All implicit constants below may depend on $t$ but not on $n$.

\medskip

Since $X_{12}\in[0,1]$ and $\sigma>0$ is fixed, $Y=(X_{12}-\mu)/\sigma$ is bounded:
there exists $K<\infty$ such that
$
|Y|\le K \quad\text{a.s.}
$
Define 
\begin{equation*}
\Delta:=\frac{Y}{\sqrt{n-1}},
\qquad\text{so that}\qquad
|\Delta|\le \frac{K}{\sqrt{n-1}}\le \frac{K}{\sqrt n}.
\end{equation*}
Thus $|\Delta|\le K/\sqrt n$, so Lemma~\ref{lem:lemma2_uq}(a) applies
provided its window constant $C(t)$ is chosen $\ge K$ (which we may assume by enlarging $C(t)$ if needed).

\medskip

Apply Lemma~\ref{lem:lemma2_uq}(a) with $u_n=x_n\pm \Delta$.
There exists a \emph{deterministic} sequence $\varepsilon_n=\varepsilon_n(t)$ such that
\begin{equation}
\label{eq:eps_def}
\varepsilon_n=O\!\left(\frac{(\log\log n)^2}{\log n}\right),
\quad n\to\infty,
\end{equation}
and, uniformly for all realizations of $Y$ (equivalently, uniformly in $|\Delta|\le K/\sqrt n$),
\begin{align*}
A_n(Y)
&=
\bigl(1-\Phi(x_n-\Delta)\bigr)\,(1+\varepsilon_n),
\\
B_n(Y)
&=
\bigl(1-\Phi(x_n+\Delta)\bigr)\,(1+\varepsilon_n).
\end{align*}
Consequently,
\begin{equation*}
M_n(Y)
=
(1+\varepsilon_n)\,\widetilde M_n(\Delta),
\quad
\widetilde M_n(\Delta)
:=
\frac{(1-\Phi(x_n-\Delta))+(1-\Phi(x_n+\Delta))}{2}.
\end{equation*}

\medskip

Therefore,
\begin{equation}
\label{eq:Var_scale_eps}
\mathrm{Var}(M_n(Y))
=
(1+\varepsilon_n)^2\,\mathrm{Var}\bigl(\widetilde M_n(\Delta)\bigr).
\end{equation}
Therefore it suffices to bound $\mathrm{Var}(\widetilde M_n(\Delta))$.

\medskip

Define $g(u):=1-\Phi(u)$. Then $g'(u)=-\varphi(u)$ and $g''(u)=-\varphi'(u)$.
Taylor's theorem gives: for each $\Delta$,
there exist $\theta_1,\theta_2\in(-1,1)$ such that
\begin{align}
&
g(x_n-\Delta)=
g(x_n)-\Delta g'(x_n)+\frac{\Delta^2}{2}g''(x_n-\theta_1\Delta)\nonumber\\
&
=
g(x_n)+\Delta\varphi(x_n)-\frac{\Delta^2}{2}\varphi'(x_n-\theta_1\Delta),
\label{eq:g_minus_Taylor}\\
&
g(x_n+\Delta)
=
g(x_n)+\Delta g'(x_n)+\frac{\Delta^2}{2}g''(x_n+\theta_2\Delta)\nonumber\\
&
=
g(x_n)-\Delta\varphi(x_n)-\frac{\Delta^2}{2}\varphi'(x_n+\theta_2\Delta).
\label{eq:g_plus_Taylor}
\end{align}
Averaging \eqref{eq:g_minus_Taylor} and \eqref{eq:g_plus_Taylor} yields:
\begin{equation}
\label{eq:even_Taylor_exact}
\widetilde M_n(\Delta)
=
g(x_n)
-\frac{\Delta^2}{4}\Bigl(\varphi'(x_n-\theta_1\Delta)+\varphi'(x_n+\theta_2\Delta)\Bigr).
\end{equation}

Therefore,
\begin{equation*}
\mathrm{Var}\bigl(\widetilde M_n(\Delta)\bigr)
=
\mathrm{Var}\!\left(
\frac{\Delta^2}{4}\Bigl(\varphi'(x_n-\theta_1\Delta)+\varphi'(x_n+\theta_2\Delta)\Bigr)
\right).
\end{equation*}

\medskip

Recall $\varphi'(u)=-u\varphi(u)$.
We claim that for all sufficiently large $n$ (depending on $t$), there exists $C<\infty$ such that
\begin{equation}
\label{eq:phiprime_uniform_bound}
\sup_{|\theta|\le 1}\ \sup_{|\Delta|\le K/\sqrt n}\ 
\bigl|\varphi'(x_n+\theta\Delta)\bigr|
\le
C\,x_n\varphi(x_n).
\end{equation}
Indeed, for such $\theta,\Delta$,
$$
\bigl|\varphi'(x_n+\theta\Delta)\bigr|
=
|x_n+\theta\Delta|\,\varphi(x_n+\theta\Delta)
\le
(|x_n|+|\Delta|)\,\varphi(x_n+\theta\Delta).
$$
Moreover,
$$
\frac{\varphi(x_n+\theta\Delta)}{\varphi(x_n)}
=
\exp\!\left(-\theta x_n\Delta-\frac{\theta^2\Delta^2}{2}\right).
$$
Since $|x_n|=O(\sqrt{\log n})$ and $|\Delta|\le K/\sqrt n$, we have
$|x_n\Delta|\le K\sqrt{\log n}/\sqrt n\to 0$ and $\Delta^2\le K^2/n\to 0$,
so this ratio is bounded by positive constant for all large $n$.
Also $|\Delta|=o(x_n)$, hence $|x_n|+|\Delta|\le 2x_n$ for all large $n$.
Combining these bounds yields \eqref{eq:phiprime_uniform_bound}.

\medskip

Let
$$
H_n(\Delta):=\frac{1}{4}\Bigl(\varphi'(x_n-\theta_1\Delta)+\varphi'(x_n+\theta_2\Delta)\Bigr).
$$
By \eqref{eq:phiprime_uniform_bound}, for all large $n$ and all admissible $\Delta$,
\begin{equation}
\label{eq:Hn_bound}
|H_n(\Delta)|
\le
C\,x_n\varphi(x_n).
\end{equation}
Then \eqref{eq:even_Taylor_exact} rewrites as
$$
\widetilde M_n(\Delta)
=
g(x_n)-\Delta^2\,H_n(\Delta),
$$
hence, 
$$
\mathrm{Var}\bigl(\widetilde M_n(\Delta)\bigr)
=
\mathrm{Var}\bigl(\Delta^2 H_n(\Delta)\bigr)
\le
\mathbb E\bigl[(\Delta^2 H_n(\Delta))^2\bigr].
$$
Using \eqref{eq:Hn_bound},
\begin{equation}
\label{eq:Var_tilde_bound}
\mathrm{Var}\bigl(\widetilde M_n(\Delta)\bigr)
\le
C\,x_n^2\varphi(x_n)^2\,\mathbb E[\Delta^4].
\end{equation}
Now $\Delta=Y/\sqrt{n-1}$, so
\begin{equation}
\label{eq:Delta4}
\mathbb E[\Delta^4]
=
\frac{\mathbb E[Y^4]}{(n-1)^2}
\le
\frac{C}{n^2},
\end{equation}
since $Y$ is bounded.
Substituting \eqref{eq:Delta4} into \eqref{eq:Var_tilde_bound} yields
\begin{equation}
\label{eq:Var_tilde_final}
\mathrm{Var}\bigl(\widetilde M_n(\Delta)\bigr)
\le
C\left(\frac{x_n\varphi(x_n)}{n}\right)^2.
\end{equation}

\medskip

Insert \eqref{eq:Var_tilde_final} into \eqref{eq:Var_scale_eps}:
$$
\mathrm{Var}(M_n(Y))
\le
(1+\varepsilon_n)^2\,
C\left(\frac{x_n\varphi(x_n)}{n}\right)^2.
$$
Since $\varepsilon_n\to 0$ by \eqref{eq:eps_def}, $(1+\varepsilon_n)^2$ is bounded,
and we obtain \eqref{eq:VarMn_bound_clean}.

From \eqref{eq:phi_xnt_asymp}, we have
$$
\varphi(x_n)
=\frac{e^{-t}\sqrt{2\log n}}{n}
\left(1+O\!\left(\frac{(\log\log n)^2}{\log n}\right)\right),
$$
and since $x_n=O(\sqrt{\log n})$ we have $x_n\varphi(x_n)=O(\log n/n)$.
Substituting into \eqref{eq:VarMn_bound_clean} gives \eqref{eq:VarMn_order_clean}.
\end{proof}

%%%%%%%%%%%%%%%%%%%%%%%%%%%%%%%%%%%%%%%%%%%%%%%%%%%%%%%%%%%%

\section{Proof of Proposition~\ref{pro:1}}
\label{app:B}
\begin{proof}[Proof of Proposition~\ref{pro:1}]
Fix $t\in\mathbb{R}$.\\ Recall from \eqref{eq:pw} that
$
p_n(t)=\mathbb{P}\!\left(s_1^\star(n)>x_n(t)\right),\,\,\,
\lambda_n(t)=n\,p_n(t)
$
and 
$
s_1(n)=\sum_{j=2}^{n} X_{1j}.
$
Define
$$
Y_j:=\frac{X_{1,j+1}-\mu}{\sigma},\qquad j=1,2,\ldots .
$$
Then $\{Y_j\}$ are i.i.d., $\mathbb{E}Y_1=0$, $Var(Y_1)=1$, and
$$
s_1^\star(n)=\frac{s_1(n)-(n-1)\mu}{\sqrt{n-1}\,\sigma}
=\frac{1}{\sqrt{n-1}}\sum_{j=1}^{n-1} Y_j.
$$

Therefore,
\begin{equation}\label{eq:pn_as_sum}
p_n(t)=
\mathbb{P}\!\left(\frac{1}{\sqrt{n-1}}\sum_{j=1}^{n-1}Y_j>x_n(t)\right).
\end{equation}

Moreover, since $X_{ij}\in D\subset[0,1]$ is bounded, the Cram\'er's condition in Theorem~\ref{th:petrov} holds.

From \eqref{eq:xnt} we have $x_n(t)\sim \sqrt{2\log n}$ as $n\to\infty$, hence
$
x_n(t)=o(\sqrt{n-1})\qquad (n\to\infty).
$
Thus Theorem~\ref{th:petrov} is applicable. Applying Theorem~\ref{th:petrov} to \eqref{eq:pn_as_sum} yields
\begin{equation*}
\frac{p_n(t)}{1-\Phi(x_n(t))}
=
\exp\!\left(\frac{x_n(t)^3}{\sqrt{n-1}}\,
\eta\!\left(\frac{x_n(t)}{\sqrt{n-1}}\right)\right)
\left(1+O\!\left(\frac{x_n(t)+1}{\sqrt{n-1}}\right)\right),
\quad n\to\infty,
\end{equation*}
where $\eta(\cdot)$ is analytic near $0$ (hence bounded in a neighborhood of $0$).
Since $x_n(t)=O(\sqrt{\log n})$, we have
\[
\frac{x_n(t)^3}{\sqrt{n-1}}=O\!\left(\frac{(\log n)^{3/2}}{\sqrt{n}}\right)\to 0,
\qquad
\frac{x_n(t)+1}{\sqrt{n-1}}=O\!\left(\frac{\sqrt{\log n}}{\sqrt{n}}\right)\to 0,
\]
and hence
\begin{equation}\label{eq:petrov_factor_1}
\frac{p_n(t)}{1-\Phi(x_n(t))}
=
1+O\!\left(\frac{(\log n)^{3/2}}{\sqrt{n}}\right),
\quad n\to\infty.
\end{equation}
Note that $(\log n)^{3/2}/\sqrt n = o((\log\log n)^2/\log n)$.

\medskip

By Result~\ref{eq:mills},
\begin{equation}\label{eq:mills_use}
1-\Phi(x)=\frac{\varphi(x)}{x}\left(1+O\!\left(\frac{1}{x^2}\right)\right),
\quad x\to\infty.
\end{equation}
Since $x_n(t)\sim \sqrt{2\log n}\to\infty$, \eqref{eq:mills_use} applies with $x=x_n(t)$:
\begin{equation}\label{eq:tail_in_terms_phi}
1-\Phi(x_n(t))
=\frac{\varphi(x_n(t))}{x_n(t)}\left(1+O\!\left(\frac{1}{x_n(t)^2}\right)\right)
=\frac{\varphi(x_n(t))}{x_n(t)}\left(1+O\!\left(\frac{1}{\log n}\right)\right).
\end{equation}
\bigskip

Recall \eqref{eq:ab} and define  $u_n:=\sqrt{2\log n}$ and $c_n:=\log\log n+\log(4\pi)$, so that
$
b_n=u_n-\frac{c_n}{2u_n},\,\,\, a_n=\frac{1}{u_n}.$
A direct expansion gives
\begin{align*}
x_n(t)^2
&=(b_n+a_nt)^2
=b_n^2+2a_n b_n\,t+a_n^2 t^2\\
&=\left(u_n-\frac{c_n}{2u_n}\right)^2
+2\left(\frac{1}{u_n}\right)\left(u_n-\frac{c_n}{2u_n}\right)t
+O\!\left(\frac{1}{\log n}\right)\\
&=u_n^2-c_n+2t+O\!\left(\frac{(\log\log n)^2}{\log n}\right)
\quad (n\to\infty),
\end{align*}
since $u_n^2=2\log n$ and $c_n^2/u_n^2=O((\log\log n)^2/\log n)$.
Therefore,
\begin{align*}
&
\varphi(x_n(t))
=(2\pi)^{-1/2}\exp\!\left(-\frac{x_n(t)^2}{2}\right)\\
&
=(2\pi)^{-1/2}\exp\!\left(-\log n+\frac{c_n}{2}-t+O\!\left(\frac{(\log\log n)^2}{\log n}\right)\right).
\end{align*}
Using $\exp(c_n/2)=\exp((\log\log n)/2)\exp((\log(4\pi))/2)=\sqrt{\log n}\,\sqrt{4\pi}$,
we obtain
\begin{equation}\label{eq:phi_xnt_asymp}
\varphi(x_n(t))
=\frac{e^{-t}\sqrt{2\log n}}{n}\left(1+O\!\left(\frac{(\log\log n)^2}{\log n}\right)\right).
\end{equation}

Substitute \eqref{eq:phi_xnt_asymp} into \eqref{eq:tail_in_terms_phi} and use
$x_n(t)\sim \sqrt{2\log n}$, we obtain
\begin{equation}\label{eq:tail_asymp_final}
1-\Phi(x_n(t))
=\frac{e^{-t}}{n}\left(1+O\!\left(\frac{(\log\log n)^2}{\log n}\right)\right).
\end{equation}

From \eqref{eq:petrov_factor_1} and \eqref{eq:tail_asymp_final},
$$
p_n(t)
=
\left(1-\Phi(x_n(t))\right)
\left(1+O\!\left(\frac{(\log n)^{3/2}}{\sqrt{n}}\right)\right)
=
\frac{e^{-t}}{n}
\left(1+O\!\left(\frac{(\log\log n)^2}{\log n}\right)\right),
$$
and therefore
\[
\lambda_n(t)=n\,p_n(t)=e^{-t}\left(1+O\!\left(\frac{(\log\log n)^2}{\log n}\right)\right),
\quad n\to\infty.
\]
\end{proof}

\section{Proof of Proposition~\ref{pro:2}}
\label{app:C}
\begin{proof}[Proof of Proposition~\ref{pro:2}]

Fix $t\in\mathbb R$ and let
$
x_n:=x_n(t).
$
Recall that
$$
I_i^{(n)}(t)=\mathbf 1\{s_i^\star(n)>x_n\},
\quad
p_n(t)=\mathbb P(I_1^{(n)}(t)=1),
\quad
\lambda_n(t)=np_n(t).
$$
We shall compute
$$
\mathrm{Cov}\!\left(I_1^{(n)}(t),I_2^{(n)}(t)\right)
=
\mathbb P(I_1^{(n)}(t)=1,I_2^{(n)}(t)=1)-p_n(t)^2.
$$

%%%%%%%%%%%%%%%%%%%%%%%%%%%%%%%%%%%%%%%%%%%%%%%%%%%%%%%%%%%%%%%%%%%%

Define 
$$
Y_{ij}:=\frac{X_{ij}-\mu}{\sigma}, \qquad i\ne j.
$$
Under Model $M_{[0,1]}$, we have $\mu=\mathbb E(X_{ij})=1/2$ by 
\eqref{eq:symD}. Also, $X_{ij}\in[0,1]$ implies $|Y_{ij}|\le 1/\sigma$ is bounded.

For player $1$,
$$
s_1^\star(n)
=\frac{1}{\sqrt{n-1}}\sum_{j=2}^n Y_{1j}
=
\frac{1}{\sqrt{n-1}}\Bigl(Y_{12}+\sum_{j=3}^n Y_{1j}\Bigr).
$$
For player $2$,
$$
s_2^\star(n)
=\frac{1}{\sqrt{n-1}}\sum_{j\ne 2} Y_{2j}
=
\frac{1}{\sqrt{n-1}}\Bigl(Y_{21}+\sum_{j=3}^n Y_{2j}\Bigr).
$$
Using $X_{21}=1-X_{12}$ and $\mu=1/2$, we get
$$
Y_{21}=\frac{X_{21}-1/2}{\sigma}=\frac{(1-X_{12})-1/2}{\sigma}
=
-\frac{X_{12}-1/2}{\sigma}
=
-Y_{12}.
$$
Setting,
$
Y:=Y_{12},
$
we obtain 
\begin{equation}
\label{eq:sstar_representation}
s_1^\star(n)=\frac{1}{\sqrt{n-1}}\Bigl(Y+\sum_{j=3}^n Y_{1j}\Bigr),
\quad
s_2^\star(n)=\frac{1}{\sqrt{n-1}}\Bigl(-Y+\sum_{j=3}^n Y_{2j}\Bigr).
\end{equation}

Now define:
$$
R_1:=\frac{1}{\sqrt{n-2}}\sum_{j=3}^n Y_{1j},
\quad
R_2:=\frac{1}{\sqrt{n-2}}\sum_{j=3}^n Y_{2j}.
$$
Then \eqref{eq:sstar_representation} becomes
\begin{equation}
\label{eq:sstar_R}
s_1^\star(n)=\sqrt{\frac{n-2}{n-1}}\,R_1+\frac{Y}{\sqrt{n-1}},
\quad
s_2^\star(n)=\sqrt{\frac{n-2}{n-1}}\,R_2-\frac{Y}{\sqrt{n-1}}.
\end{equation}

\medskip

The collection of match vectors $\{(X_{ij},X_{ji}):1\le i<j\le n\}$ is independent by assumption.
Therefore:
\begin{itemize}
\item $Y=Y_{12}$ depends only on the match $(1,2)$;
\item $\{Y_{1j}:j=3,\dots,n\}$ depends only on matches $(1,j)$, $j\ge3$;
\item $\{Y_{2j}:j=3,\dots,n\}$ depends only on matches $(2,j)$, $j\ge3$;
\end{itemize}
and these three families correspond to disjoint sets of matches, hence are independent.
Consequently,
\begin{equation}
\label{eq:indep}
Y,\ R_1,\ R_2 \text{ are independent, and } R_1\stackrel{d}=R_2.
\end{equation}

Introduce the notations
$$
\alpha_n:=\sqrt{\frac{n-1}{n-2}},
\qquad
h_n(y):=\frac{y}{\sqrt{n-1}}.
$$
Then from \eqref{eq:sstar_R},
\begin{align}
\{s_1^\star(n)>x_n\}
&=
\left\{R_1>\alpha_n\bigl(x_n-h_n(Y)\bigr)\right\},
\label{eq:event1}\\
\{s_2^\star(n)>x_n\}
&=
\left\{R_2>\alpha_n\bigl(x_n+h_n(Y)\bigr)\right\}.
\label{eq:event2}
\end{align}

Define 
\begin{equation*}
q_n(u):=\mathbb P(R_1>u),\qquad u\in\mathbb R.
\end{equation*}

Conditioning on $Y$ and using \eqref{eq:indep}--\eqref{eq:event2},
\begin{align*}
&
\mathbb P(I_1^{(n)}(t)=1,I_2^{(n)}(t)=1\mid Y)\\
&=
\mathbb P\!\left(R_1>\alpha_n(x_n-h_n(Y))\mid Y\right)
\mathbb P\!\left(R_2>\alpha_n(x_n+h_n(Y))\mid Y\right)
\nonumber\\
&=
q_n\!\left(\alpha_n(x_n-h_n(Y))\right)\,
q_n\!\left(\alpha_n(x_n+h_n(Y))\right).
\end{align*}
Taking expectation,
\begin{equation}
\label{eq:joint_as_E}
\mathbb P(I_1^{(n)}(t)=1,I_2^{(n)}(t)=1)
=
\mathbb E\!\left[
q_n\!\left(\alpha_n(x_n-h_n(Y))\right)\,
q_n\!\left(\alpha_n(x_n+h_n(Y))\right)
\right].
\end{equation}

Similarly,
\begin{equation}
\label{eq:pn_as_E}
p_n(t)=\mathbb P(I_1^{(n)}(t)=1)
=
\mathbb E\!\left[q_n\!\left(\alpha_n(x_n-h_n(Y))\right)\right].
\end{equation}

Therefore, everything reduces to obtaining sufficiently accurate uniform asymptotics
for $q_n(\alpha_n(x_n\pm h_n(Y)))$ when $h_n(Y)=O(n^{-1/2})$.

%%%%%%%%%%%%%%%%%%%%%%%%%%%%%%%%%%%%%%%%%%%%%%%%%%%%%%%%%%%%%%%%%%%%
Let
\begin{equation}
\label{eq:AB}
A_n(y):=q_n\!\left(\alpha_n\left(x_n-\frac{y}{\sqrt{n-1}}\right)\right),
\quad
B_n(y):=q_n\!\left(\alpha_n\left(x_n+\frac{y}{\sqrt{n-1}}\right)\right).
\end{equation}

%%%%%%%%%%%%%%%%%%%%%%%%%%%%%%%%%%%%%%%%%%%%%%%%%%%%%%%%%%%%%%%%%%%%

%%%%%%%%%%%%%%%%%%%%%%%%%%%%%%%%%%%%%%%%%%%%%%%%%%%%%%%%%%%%%%%%%%%%%%%%%

%%%%%%%%%%%%%%%%%%%%%%%%%%%%%%%%%%%%%%%%%%%%%%%%%%%%%%%%%%%%%%%%%%%%

From \eqref{eq:joint_as_E}, \eqref{eq:pn_as_E} and \eqref{eq:AB}, we obtain 
\begin{align}
\mathbb P(I_1^{(n)}(t)=1,I_2^{(n)}(t)=1)
&=
\mathbb E\!\left[A_n(Y)\,B_n(Y)\right],\,\,\,
p_n(t)
=
\mathbb E[A_n(Y)].
\label{eq:joint_AB}
\end{align}
Define 
\begin{equation}
\label{eq:MDelta}
M_n(Y):=\frac{A_n(Y)+B_n(Y)}{2},
\quad
\Delta_n(Y):=\frac{A_n(Y)-B_n(Y)}{2}.
\end{equation}
Then
$$
A_n(Y)=M_n(Y)+\Delta_n(Y),
\quad
B_n(Y)=M_n(Y)-\Delta_n(Y).
$$
Recall that
$
Y=\frac{X_{12}-\mu}{\sigma},\,\,\,\,\mu=\tfrac12 .
$
Since the distribution of $X_{12}$ is symmetric about $1/2$,
we have
$
Y \stackrel{d}{=} -Y.
$

Moreover, by definition,
\[
\Delta_n(y)
=
\frac12\left[
q_n\!\left(\alpha_n\left(x_n-\frac{y}{\sqrt{n-1}}\right)\right)
-
q_n\!\left(\alpha_n\left(x_n+\frac{y}{\sqrt{n-1}}\right)\right)
\right],
\]
which implies that $\Delta_n(\cdot)$ is an odd function, i.e.,
\[
\Delta_n(-y)=-\Delta_n(y).
\]

Therefore, 
$$
\mathbb E[\Delta_n(Y)]
=
\mathbb E[\Delta_n(-Y)]
=
-\mathbb E[\Delta_n(Y)],
$$
and hence
$$
\mathbb E[\Delta_n(Y)]=0.
$$

Hence
\begin{equation}
\label{eq:AB_decomp}
A_n(Y)B_n(Y)=M_n(Y)^2-\Delta_n(Y)^2.
\end{equation}

\medskip

By \eqref{eq:joint_AB} and \eqref{eq:AB_decomp},
\begin{align}
\mathrm{Cov}(I_1^{(n)}(t),I_2^{(n)}(t))
&=
\mathbb E[A_n(Y)B_n(Y)]-\bigl(\mathbb E[A_n(Y)]\bigr)^2
\nonumber\\
&=
\Bigl(\mathbb E[M_n(Y)^2]-\bigl(\mathbb E[M_n(Y)]\bigr)^2\Bigr)
-\mathbb E[\Delta_n(Y)^2]
\nonumber\\
&=
\mathrm{Var}(M_n(Y))-\mathbb E[\Delta_n(Y)^2].
\label{eq:cov_var_minus}
\end{align}

\medskip

By \eqref{eq:MDelta},
\[
2\Delta_n(Y)=A_n(Y)-B_n(Y)
=
q_n\!\left(\alpha_n\left(x_n-\frac{Y}{\sqrt{n-1}}\right)\right)
-
q_n\!\left(\alpha_n\left(x_n+\frac{Y}{\sqrt{n-1}}\right)\right).
\]

Since $Y$ is bounded, $|Y|\le K$ a.s., hence $|\Delta_n|=|Y|/\sqrt{n-1}\le K/\sqrt n$.
%Since $Y$ is bounded, $|Y|/\sqrt{n-1}$ lies in the uniform window \eqref{eq:un_window} for all large $n$ (after enlarging $C(t)$ if needed), hence the error term remains deterministic (depends only on $t$ and $n$).

Thus Lemma~\ref{lem:lemma2_uq}(b) applies after choosing its window constant $C(t)\ge K$:

\begin{equation*}
2\Delta_n(Y)
=
2\cdot \frac{Y}{\sqrt{n-1}}\,
\varphi(x_n)
\left(
1+O\!\left(\frac{(\log\log n)^2}{\log n}\right)
\right).
\end{equation*}
Therefore
\begin{equation*}
\Delta_n(Y)
=
\frac{Y}{\sqrt{n-1}}\,
\varphi(x_n)
\left(
1+O\!\left(\frac{(\log\log n)^2}{\log n}\right)
\right),
\end{equation*}

and
$$
\Delta_n(Y)^2
=
\frac{Y^2}{n-1}\,\varphi(x_n)^2
\left(
1+O\!\left(\frac{(\log\log n)^2}{\log n}\right)
\right),
$$
where we used that $(1+O(\varepsilon_n))^2=1+O(\varepsilon_n)$.
Taking expectations and using $\mathbb E[Y^2]=1$,
\begin{equation}
\label{eq:EDelta2}
\mathbb E[\Delta_n(Y)^2]
=
\frac{\varphi(x_n)^2}{n-1}
\left(
1+O\!\left(\frac{(\log\log n)^2}{\log n}\right)
\right).
\end{equation}

\medskip

From \eqref{eq:phi_xnt_asymp},
$$
\varphi(x_n)
=
\frac{e^{-t}\sqrt{2\log n}}{n}
\left(
1+O\!\left(\frac{(\log\log n)^2}{\log n}\right)
\right),
$$

so
\begin{equation}
\label{eq:phi_square}
\varphi(x_n)^2
=
\frac{2e^{-2t}\log n}{n^2}
\left(
1+O\!\left(\frac{(\log\log n)^2}{\log n}\right)
\right).
\end{equation}
Substitute \eqref{eq:phi_square} into \eqref{eq:EDelta2}:
\begin{equation}
\label{eq:EDelta2_final}
\mathbb E[\Delta_n(Y)^2]
=
\frac{2e^{-2t}\log n}{n^3}
\left(
1+O\!\left(\frac{(\log\log n)^2}{\log n}\right)
\right).
\end{equation}

\medskip

By Lemma~\ref{lem:lemma3_var_bound},
\begin{equation}
\label{eq:VarMn_bound}
\mathrm{Var}(M_n(Y))
=
O\!\left(\frac{(\log n)^2}{n^4}\right).
\end{equation}

\medskip

Insert \eqref{eq:EDelta2_final} and \eqref{eq:VarMn_bound} into \eqref{eq:cov_var_minus}:
\begin{align*}
\mathrm{Cov}(I_1^{(n)}(t),I_2^{(n)}(t))
&=
\mathrm{Var}(M_n(Y))-\mathbb E[\Delta_n(Y)^2]\\
&=
-\frac{2e^{-2t}\log n}{n^3}
\left(
1+O\!\left(\frac{(\log\log n)^2}{\log n}\right)
\right)
+
O\!\left(\frac{(\log n)^2}{n^4}\right).
\end{align*}
Since $O((\log n)^2/n^4)=\frac{2e^{-2t}\log n}{n^3}\cdot O(\log n/n)$,
and $\log n/n=o((\log\log n)^2/\log n)$, we obtain,
\[
\mathrm{Cov}\!\left(I_1^{(n)}(t), I_2^{(n)}(t)\right)
=
-\frac{2\,e^{-2t}\log n}{n^3}
\left(
1+O\!\left(\frac{(\log\log n)^2}{\log n}\right)
\right),
\quad n\to\infty.
\]
\end{proof}

\section*{Acknowledgements}
I thank Boris Alemi for executing the MATLAB program on the department’s high-performance computer.
This research is supported in part by BSF grant 2020063.

{}

\end{document}